\documentclass[centering,11pt,reqno]{amsart}  

\usepackage[utf8]{inputenc}
\usepackage[T1]{fontenc}
\usepackage{amsmath,amsthm}
\usepackage{amsfonts,amssymb}
\usepackage[mathscr]{eucal}
\usepackage{url}
\usepackage{paralist}
\usepackage[colorlinks,cite color=blue,pagebackref=true,pdftex]{hyperref}
\usepackage[margin=2.5cm]{geometry}
\usepackage{tikz}
\usetikzlibrary{calc,shapes}
\usepackage{mathtools}
\usepackage{blkarray}
\usepackage[capitalize]{cleveref}
 \usepackage[foot]{amsaddr}
\usepackage{tikz-cd}

\newtheorem{theorem}{Theorem}[section]
\newtheorem*{lemma*}{Lemma}
\newtheorem*{prop*}{Proposition}
\newtheorem{lem}[theorem]{Lemma}

\newtheorem{op}[theorem]{Open Problem}

\theoremstyle{definition}

\renewcommand{\epsilon}{\varepsilon}

\definecolor{alexmcolor}{RGB}{9,6,250}
\definecolor{amandacolor}{RGB}{50,150,50}



\DeclareMathOperator*{\argmax}{arg\!\max}




\parskip=5pt
\parindent=0pt

\title{Exponential Lower Bounds for Many Pivot Rules for the Simplex Method}

\author[Black]{Alexander E. Black}
\address{Department of Mathematics, University of California, Davis, CA 95616}

\begin{document}

\maketitle

\begin{abstract}  The existence of a pivot rule for the simplex method that guarantees a strongly polynomial run-time is a longstanding, fundamental open problem in the theory of linear programming. The leading pivot rule in theory is the shadow pivot rule, which solves a linear program by projecting the feasible region onto a polygon. It has been shown to perform in expected strongly polynomial time on uniformly random instances and in smoothed analysis. In practice, the pivot rule of choice is the steepest edge rule, which normalizes the set of improving neighbors and then chooses a maximally improving normalized neighbor. Exponential lower bounds are known for both rules in worst-case analysis. However, for the shadow simplex method, all exponential examples were only proven for one choice of projection, and for the steepest edge rule, the lower bounds were only proven for the Euclidean norm. In this work, we construct linear programs for which any choice of projection for shadow rule variants will lead to an exponential run-time and exponential examples for any choice of norm for a steepest edge variant. 
\end{abstract}

\section{Introduction}

The existence of a strongly polynomial time algorithm for linear programming (LP) remains one of the most fundamental open problems in optimization as highlighted by Smale as his 9th problem for the $21$st century \cite{SmaleProblems}. The simplex method remains a candidate solution to this problem. However, despite many years of evidence of practical performance, it remains open whether one can even find a weakly polynomial worst-case guarantee for its run-time. In part, this is due to the many possible variants of the simplex method. Namely, geometrically, the simplex method solves a bounded, feasible linear program $\max(\mathbf{c}^{\intercal} \mathbf{x})$ subject to $A\mathbf{x} \leq \mathbf{b}$ by generating an initial vertex of the polytope defined by $\{\mathbf{x}: A\mathbf{x} \leq \mathbf{b}\}$ and then walking from vertex to vertex along edges such that at each step the linear objective function increases. Regardless of how the path is chosen, so long as the linear objective function increases at each step, the path will eventually reach an optimum. Such a path is called a \textbf{monotone path} and a way of choosing a monotone path is called a \textbf{pivot rule}. The length of the monotone path together with the complexity of making a single step determines the run-time of the simplex method. 

The key focus of the first part of this work will be \textbf{shadow pivot rules}, which choose a monotone path by projecting the polytope onto some two-dimensional polygon called the \textbf{shadow} implicitly and then lifting a path on the polygon back to a path on the polytope. Many of the strongest positive results for the simplex method rely on variants of the shadow simplex method. The first major breakthrough in this direction is due to Borgwardt in \cite{borgwardt, BorgwardtErratum} in the 1970's and 1980's in which he showed the shadow simplex method on a random linear program taken from any spherically symmetric probability distribution takes an expected $\Theta(n^{2}m^{\frac{1}{n-1}})$ steps for a linear program defined by $m$ facets in $n$ variables. Spielman and Teng introduced the smoothed analysis of algorithms in \cite{SpielmanTeng} with the key tool being the shadow simplex method in their analysis and all subsequent analyses \cite{vershynin, smoothedanalysis, SmoothUpperLower, ImprovedSmoothedAnalysis, KelnerSpielman}. In this case, each entry of $A$ and $\mathbf{b}$ for the linear program $\max(\mathbf{c}^{\intercal} \mathbf{x})$ such that $A \mathbf{x} \leq \mathbf{b}$ is perturbed by a Gaussian with standard deviation $\sigma$, and the best known upper bounds on the expected number of steps in this framework is $O(\sigma^{-3/2}n^{13/4}\log^{7/4}(m))$ from \cite{SmoothUpperLower}. Thus, in those two models of random linear programs, the simplex method performs in strongly polynomial time in expectation. There has furthermore been successful work on using random choices of shadows for a fixed linear program. Bounds in that case are in terms of $m$ and $n$ alongside a measure of the size of the data given by $\Delta$, the maximum absolute value of a subdeterminant of the constraint matrix, or other similar parameters \cite{DiscCurv, BrunschRoglin} and tend to be of the form $\text{poly}(m,n, \Delta)$. Deterministic versions of the shadow simplex method have also been proven to take an optimal, linear number of non-degenerate steps on $0/1$-polytopes \cite{01simplex} with more general bounds shown for lattice polytopes \cite{SODAPaper}. Given all of these positive results, we ask here whether there is a version of the shadow simplex method that guarantees a strongly polynomial time run-time.

It was shown first by Murty in \cite{Murty} and then by others \cite{01ProblemOrigin, DefProds, GoldfarbCube} that there are linear programs for which a shadow pivot rule could potentially take exponentially many steps. In fact, given any polygon with $\log(n)$ extension complexity, there is a corresponding polytope with an exponential sized shadow. However, the path chosen by the shadow pivot rule depends on the choice of shadow. In each known pathological example, it had been shown that there is one bad choice of shadow, but one could potentially choose a different shadow. For example, there exists a $0/1$-polytope with an exponential sized shadow \cite{01ProblemOrigin}, but in \cite{01simplex} they showed that one could always avoid that bad case by choosing a different shadow. In general, there are many possible paths that could be chosen depending on the shadow. For some linear programs such as on simplices, hyper-cubes, certain zonotopes and matroid independence polytopes, every path arises as the path chosen by some shadow \cite{FlagPoly, FiberPoly, ZonotopesCellStrings, cubepiles}. Even when not all paths show up as a shadow, there still may be many paths that do such as on the cross-polytope \cite{CrossPolyPaper}. Thus, exhibiting one shadow of exponential size does not imply that there is not some other polynomial sized shadow that could be chosen. We resolve that problem here in complete generality:

\begin{theorem}
\label{thm:shadowsimplex}
For each $n \in \mathbb{N}$, there exists an $n$-dimensional simple polytope $P$ with $3n$ facets, a vertex $\mathbf{v}$ of $P$, and $\mathbf{c} \in \mathbb{R}^{n}$ such that any shadow path on $P$ starting at $\mathbf{v}$ to optimize $\mathbf{c}^{\intercal} \mathbf{x}$ is of length at least $2^{n}$. Hence, there is no strongly polynomial time shadow simplex algorithm for linear programming regardless of how the shadow is chosen. 
\end{theorem}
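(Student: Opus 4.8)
I would begin with the standard dictionary between shadow paths and parametric linear programming. A shadow path on $P$ that starts at $\mathbf v$ and optimizes $\mathbf c^{\intercal}\mathbf x$ is determined by a choice of auxiliary functional $\mathbf d$ for which $\mathbf v$ is the unique $\mathbf d$-maximal vertex, that is, $\mathbf d \in \operatorname{int} N_P(\mathbf v)$, where $N_P(\mathbf v)$ is the normal cone of $P$ at $\mathbf v$; its vertex sequence is exactly the sequence of optimal vertices of $\mathbf d + t\mathbf c$ as $t$ runs over $[0,\infty]$. These optimal vertices are precisely the ones whose normal cones meet the ray $R_{\mathbf d}=\{\mathbf d+t\mathbf c : t\ge 0\}$, so the length of the shadow path equals the number of full-dimensional cones of the normal fan $\mathcal N(P)$ met by $R_{\mathbf d}$, minus one. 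The elementary inequality $(t_2-t_1)\bigl(\mathbf c^{\intercal}\mathbf w(t_2)-\mathbf c^{\intercal}\mathbf w(t_1)\bigr)\ge 0$ for the optimal vertex $\mathbf w(t)$ shows the objective is nondecreasing along such a path, so only cones of vertices with $\mathbf c$-value at least $\mathbf c^{\intercal}\mathbf v$ can appear, and the path indeed terminates at the $\mathbf c$-optimal vertex. Hence it suffices to build a simple $n$-polytope $P$ with $3n$ facets, a vertex $\mathbf v$, and an objective $\mathbf c$ lying in the interior of the normal cone of the $\mathbf c$-optimal vertex, such that \emph{every} ray emanating from $\operatorname{int}N_P(\mathbf v)$ in direction $\mathbf c$ meets at least $2^n+1$ full-dimensional cones of $\mathcal N(P)$. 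Degenerate $\mathbf d$, where the shadow polygon is not simple, would be dealt with at the end by perturbation.

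\textbf{Step 2: A forced two-dimensional core and the construction.} The advantage of the reformulation is that a ray cannot wiggle, so a path may be forced to be long even when short monotone paths exist -- the mechanism absent from all prior pathological examples. I would first isolate this in the plane, where the shadow of a polygon is the polygon itself: in a hexagon whose $\mathbf c$-minimal and $\mathbf c$-maximal vertices are adjacent, if $\mathbf v$ is taken to be the second-lowest vertex, then the only $\mathbf c$-monotone path from $\mathbf v$ to the top -- hence the only shadow path, for every $\mathbf d$ -- runs along the remaining four edges, so the forced length is $4=2^{2}$. For general even $n=2k$ the construction would be a Klee--Minty-style twisted product of $k$ such hexagons: rather than letting the forced lengths merely add as in an honest product, the deformation makes the $\mathbf c$-direction cycle through all four levels of the outermost hexagon factor, re-traversing first a copy and then a mirror copy of the inner $(2k-2)$-dimensional block at each level, so that the forced lengths multiply and one reaches $4^{k}=2^{n}$. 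Each hexagon factor contributes exactly $6$ facets and two dimensions, which is why the facet count is $3n$; odd $n$ needs one extra one-dimensional factor together with a small adjustment. The $n$ facets beyond the $2n$ of a Goldfarb-type cube are precisely what blocks the ``unfolding'' projections responsible for the short shadows in the classical constructions.

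\textbf{Step 3: The induction and the main obstacle.} I would then argue by induction on $k$ using the cone-crossing reformulation. Parametrizing $R_{\mathbf d}$ by $t$, one shows that as $t$ increases the ray is first driven onto the lower facet of the outermost twisted prism and forced across it: there the inductive hypothesis applies to the fan induced on that facet -- which is essentially $\mathcal N$ of the inner block -- and the restriction of $\mathbf d$ still lands in the relevant cone, so the ray meets at least $2^{n-2}+1$ cones; then it is lifted onto the upper facet and forced across a mirror copy for at least $2^{n-2}$ further cones; and this repeats four times, for a total of at least $2^{n}$. The crux, and the step I expect to be the main difficulty, is establishing this \emph{uniformly over all admissible $\mathbf d$}: one must rule out every shortcut, i.e.\ show that no choice of $\mathbf d$ lets $R_{\mathbf d}$ slip between the two copies or collapse a twisted prism onto a low-complexity slice. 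This is exactly where the shear parameters of the deformation and the extra per-level facet would be tuned, and it amounts to a genuinely global statement about the normal fan -- that its cones really do stack up densely along the $\mathbf c$-direction -- in contrast to the single-witness arguments that suffice for the Klee--Minty cube, the Goldfarb cube, and the earlier shadow lower bounds. Once the uniform bound is in place, extracting the stated theorem, and handling non-generic $\mathbf d$ by a limiting argument, should be routine.
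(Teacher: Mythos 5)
Your Step 1 reformulation (shadow paths from $\mathbf v$ correspond to rays $\mathbf d+t\mathbf c$ with $\mathbf d\in\operatorname{int}N_P(\mathbf v)$ crossing the normal fan) is correct and is also the paper's starting point, and your 2-dimensional hexagon observation is fine. But from there the paper goes a different way, and your route has a genuine gap at exactly the step you yourself flag as ``the crux.'' The entire difficulty of the theorem is uniformity over the full-dimensional open cone of admissible $\mathbf d$, and your twisted product of hexagons leaves the normal cone at the start vertex as a product of $k$ two-dimensional cones --- a fat cone in which the relative weights placed on the different factors are unbounded. For deformed or twisted products this is fatal: whether the factor lengths multiply rather than add depends on the shear parameters being large relative to the ratios of the components of $\mathbf d$ across factors, and since those ratios range over all of $(0,\infty)$ as $\mathbf d$ varies in the open cone, no fixed choice of shears defeats every $\mathbf d$. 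This is precisely the known behavior of the Goldfarb cube, which has one exponential shadow and many polynomial ones; ``tuning the shear parameters and the extra per-level facet'' cannot by itself produce the uniform bound, and nothing in your sketch supplies the missing mechanism. (Your cone count in Step 3 is also internally inconsistent --- four repetitions of two traversals of length $2^{n-2}$ gives $2^{n+1}$, not $2^n$ --- but that is cosmetic next to the uniformity issue.)

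The paper's mechanism is different and is the real content of the proof: it starts from a polytope with a single witnessing exponential shadow (a Goldfarb cube with objective $\mathbf c$ and auxiliary functional $\mathbf w$), thickens the witnessing segment from $\mathbf w$ to $\mathbf c$ to an $\varepsilon$-tube that still meets each of the $2^n$ normal cones in its interior, and then truncates the start vertex $n$ times. Lemma \ref{lem:subdiv} shows that cutting off a vertex of a simple polytope subdivides its normal cone and adds one facet, and Lemma \ref{lem:improvedsubdiv} iterates this to produce a new vertex whose normal cone has all of its extreme rays inside the prescribed $\varepsilon$-ball around $\mathbf w$. Every admissible $\mathbf d$ for the new start vertex can then be rescaled into that ball, so every shadow ray crosses the same exponentially many cones; the facet count is $2n$ from the cube plus $n$ truncating facets, giving $3n$. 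If you want to salvage your construction, you need an analogue of this cone-thinning step --- or some other argument that the start cone of your product is thin in the directions that matter --- since the hexagon product alone does not provide it.
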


Shadow paths appear in the combinatorics literature under the name \textbf{coherent monotone paths} \cite{FiberPoly}, and they play a central role in understanding the space of all monotone paths on a polytope. In particular, there is a CW-complex called the \textbf{Baues poset} built canonically as a topological space associated to the space of monotone paths for a fixed linear program \cite{CellStringsOnPolytopes}, and Billera and Sturmfels showed that there is a deformation retraction of this space onto a polytope whose vertices are precisely the coherent monotone paths \cite{FiberPoly}. One may then ask whether there always exists a polynomial length coherent monotone path between any pair of vertices on the polytope. Such a statement is different from asking for the shadow simplex method to perform polynomially many steps to solve a fixed linear program, since no linear program is specified. This question is then a variant of the \emph{polynomial Hirsch conjecture} \cite{HirschSolution, Hirsch}, which asks for a polynomial bound on the diameters of polytopes and is the central open problem in this area. Here we show the polynomial Hirsch conjecture is false if one is only allowed to follow coherent monotone paths:

\begin{theorem}
\label{thm:coherentpath}
For each $n \in \mathbb{N}$, there exists an $n$-dimensional simple polytope $P$ with $4n$ facets and a pair of vertices $\textbf{u}, \textbf{v}$ of $P$ such that any coherent path from $\mathbf{u}$ to $\mathbf{v}$ is of length at least $2^{n}$. In particular, every shadow of $P$ for which both $\textbf{u}$ and $\textbf{v}$ are projected to the boundary must have exponentially many vertices.
\end{theorem}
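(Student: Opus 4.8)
The plan is to bootstrap the statement from Theorem~\ref{thm:shadowsimplex}. Let $Q$ be the polytope produced there, with its start vertex $\mathbf{s}$, objective $\mathbf{c}$, and $\mathbf{c}$-maximal vertex $\mathbf{t}$; I will assume, adjusting the construction if necessary, that $\mathbf{s}$ and $\mathbf{t}$ are the \emph{unique} minimizer and maximizer of $\mathbf{c}$ on $Q$, so that $-\mathbf{c}\in\mathrm{int}\,N_{Q}(\mathbf{s})$ and $\mathbf{c}\in\mathrm{int}\,N_{Q}(\mathbf{t})$. The shadows of $Q$ taken in a plane $(\mathbf{c},\mathbf{d})$ are already controlled: there $\pi(\mathbf{s})$ and $\pi(\mathbf{t})$ are the leftmost and rightmost vertices of the polygon, so both boundary arcs between them are monotone $\mathbf{c}$-paths out of $\mathbf{s}$, each the upper path of a shadow $(\mathbf{c},\pm\mathbf{d})$, hence each of length at least $2^{n}$ by Theorem~\ref{thm:shadowsimplex}.

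To go from ``all shadows with horizontal direction $\mathbf{c}$'' to ``all shadows'', I would form $P$ from $Q$ by intersecting with $n$ further halfspaces whose inner normals all lie within an arbitrarily small angle $\theta$ of $\mathbf{c}$, arranged so that their common intersection is a simplicial cone with apex $\mathbf{v}$ a hair below $\mathbf{t}$ and so that the cut region is a tiny neighborhood of $\mathbf{t}$ touching no other vertex of $Q$. Generically $P$ is simple, keeps all $3n$ facets of $Q$, and has $3n+n=4n$ facets, with $\mathbf{v}$ on exactly the $n$ new ``cap'' facets, so $N_{P}(\mathbf{v})$ is a narrow cone around $\mathbf{c}$; set $\mathbf{u}:=\mathbf{s}$. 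Now if $\pi$ is a projection of $P$ with $\pi(\mathbf{v})$ on $\partial\pi(P)$, then $(\ker\pi)^{\perp}$ contains a nonzero vector of $N_{P}(\mathbf{v})$; after rotating $\R^{2}$ we may take this vector, call it $\mathbf{c}'$, to be the horizontal direction of $\pi$. Then $\mathbf{c}'$ is within angle $\theta$ of $\mathbf{c}$ and lies in $N_{P}(\mathbf{v})$, so $\pi(\mathbf{v})$ is the rightmost vertex of $\pi(P)$; and since $-\mathbf{c}\in\mathrm{int}\,N_{P}(\mathbf{u})$ (the cut did not touch $\mathbf{u}$), choosing $\theta$ small enough forces $-\mathbf{c}'\in N_{P}(\mathbf{u})$, so $\pi(\mathbf{u})$ is the leftmost vertex. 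Hence every coherent path from $\mathbf{u}$ to $\mathbf{v}$ is a boundary arc, from leftmost to rightmost vertex, of a shadow $(\mathbf{c}',\mathbf{d})$ of $P$ with $\mathbf{c}'$ near $\mathbf{c}$; because the cut lives near $\pi(\mathbf{t})$, such an arc is a boundary arc from $\pi(\mathbf{s})$ to $\pi(\mathbf{t})$ of the shadow $(\mathbf{c}',\mathbf{d})$ of $Q$ with only its last few edges near $\pi(\mathbf{t})$ replaced by $O(n)$ cap edges, i.e. a monotone $\mathbf{c}'$-path out of $\mathbf{s}$ up to that local change.

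The main obstacle is exactly this last reduction: it requires knowing that the conclusion of Theorem~\ref{thm:shadowsimplex} is \emph{stable under small perturbations of} $\mathbf{c}$, so that every monotone $\mathbf{c}'$-path out of $\mathbf{s}$ on $Q$, for $\mathbf{c}'$ near $\mathbf{c}$, still has at least $2^{n}$ edges. I expect this to be visible from the recursive construction of $Q$ — the inequalities certifying that a shadow is long being strict, or a small tilt of $\mathbf{c}$ merely moving the construction within its ``bad'' regime — but pinning it down, and then checking that swapping $O(n)$ edges near $\pi(\mathbf{t})$ cannot drag the length below the target (for which it suffices to start from a version of $Q$ carrying the slightly stronger bound $2^{n}+O(n)$, which the recursion affords), is where the real work lies. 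Granting this, every coherent path from $\mathbf{u}$ to $\mathbf{v}$ has length at least $2^{n}$, and the last sentence follows at once: a shadow of $P$ on which both $\mathbf{u}$ and $\mathbf{v}$ reach the boundary has two boundary arcs joining $\pi(\mathbf{u})$ and $\pi(\mathbf{v})$, each of which is a coherent path from $\mathbf{u}$ to $\mathbf{v}$ and hence long, so the shadow has at least $2^{n}$ vertices.
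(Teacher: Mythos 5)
Your construction is close in spirit to the paper's, but there is a genuine gap at exactly the point you flag: the reduction needs the conclusion of Theorem~\ref{thm:shadowsimplex} to be stable under replacing $\mathbf{c}$ by a nearby $\mathbf{c}'$ (and, after restricting a shadow path of $P$ to the old vertices, by an objective $\mathbf{c}''\in N_{Q}(\mathbf{t})$ that need not be close to $\mathbf{c}$ at all until you also control its scale). Theorem~\ref{thm:shadowsimplex} as a black box gives no such quantitative control --- it is a statement about one fixed $\mathbf{c}$ --- and your heuristic that ``the inequalities certifying that a shadow is long are strict'' is not a proof: making it one \emph{is} the main technical content of the theorem, not a detail to be deferred. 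The paper's way around this is to never perturb $\mathbf{c}$ and invoke stability of a theorem; instead it fixes the witnessing segment from $\mathbf{w}$ to $\mathbf{c}$ in the normal fan, notes that each of the $\alpha+1$ cones it crosses is crossed in its \emph{interior}, chooses a single $\varepsilon>0$ so that the $\varepsilon$-tube around the segment stays inside those cones, and then subdivides \emph{both} endpoint cones (via Lemma~\ref{lem:improvedsubdiv}) so that the new normal cones at $\mathbf{a}$ and $\mathbf{b}$ have all their extreme rays in the $\varepsilon$-balls $D_{\mathbf{w}}$ and $D_{\mathbf{c}}$. Any segment between points of $D_{\mathbf{w}}$ and $D_{\mathbf{c}}$ then meets the interiors of the very same intermediate cones $C_{1},\dots,C_{\alpha-1}$, which survive unchanged in the new fan, so no perturbation stability of the lower bound is ever needed --- only convexity. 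This is Theorem~\ref{thm:manyfromone}, applied to the $2n$-facet Goldfarb cube to get $2n+2n=4n$ facets.

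Concretely, your proposal only caps the target end: you pin $\mathbf{c}'$ into a thin cone around $\mathbf{c}$ but leave the source end controlled only by the qualitative statement of Theorem~\ref{thm:shadowsimplex}. To close the argument you would have to reopen that theorem's proof, extract its $\varepsilon$ and the ball $D_{\mathbf{c}}$, check that your cap's normal cone (after the rescaling needed to pass from angular closeness to membership in $D_{\mathbf{c}}$) sits inside $D_{\mathbf{c}}$, and then you are simply re-running the paper's two-sided subdivision. The auxiliary bookkeeping you introduce --- the $O(n)$ cap edges and the need for a strengthened bound $2^{n}+O(n)$ --- is likewise an artifact of this route that the tube argument avoids, since there the intermediate cones each contribute a vertex verbatim. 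Your final sentence (both boundary arcs of a shadow containing $\pi(\mathbf{u})$ and $\pi(\mathbf{v})$ are coherent paths between them, so the polygon has at least $2^{n}$ vertices) is fine.
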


This theorem is surprising given that the best known lower bounds for the polynomial Hirsch conjecture are linear \cite{HirschSolution}, and the best known upper bounds on diameters of polytopes are subexponential but superpolynomial due originally to Kalai and Kleitman in \cite{KalaiKleitman} with more recent but still superpolynomial improvements \cite{ToddImproved, NoriImproved}. There is a long history of finding superpolynomial lower bounds for the performance of individual pivot rules for the simplex method starting with the 1972 work of Klee and Minty \cite{KleeMinty} with many follow up works \cite{CunninghamLowBound, Murty, GoldfarbCube, GoldfarbSit, jeroslowcube, BlandLowBound, RandomLowBounds, ZadehLowBound, ExponentialLowZadeh, DefProds, CrissCrossBad, ZadehWorstCase, ZadehNetworkFlow} covering the standard pivot rules used in theory and practice \cite{terlakysurvey}. Our perspective differs from those who came before us in that our goal is to prove lower bounds for large families of pivot rules. 

The steepest edge pivot rule given by maximizing $\frac{\mathbf{c}^{\intercal}(\mathbf{u} - \mathbf{v})}{\|\mathbf{u}-\mathbf{v}\|_{2}}$ is the pivot rule of choice in practice. Goldfarb and Sit constructed an exponential example \cite{GoldfarbSit} in 1979 via a modification of Klee and Minty's construction method. However, the choice of $2$-norm here is restrictive. We generalize their result to an infinite class of pivot rules:

\begin{theorem} \label{thm:anynorm}
For any norm $\eta: \mathbb{R}^{n} \to \mathbb{R}$ and $n \in \mathbb{N}$, there exists an $n$-dimensional combinatorial cube $C_{n}$ and $\mathbf{c} \in \mathbb{R}^{n}$ for which the pivot rule given by
\[\mathbf{v}^{i+1} = \argmax_{\mathbf{u} \in N_{\mathbf{c}}(\mathbf{v}^{i})} \frac{\mathbf{c}^{\intercal}(\mathbf{u} - \mathbf{v}^{i})}{\eta(\mathbf{u} -\mathbf{v}^{i})}, \]
where $N_{\mathbf{c}}(\mathbf{v}^{i})$ is the set of $\mathbf{c}$-improving neighbors of $\mathbf{v}^{i}$, follows a path through all vertices of $C_{n}$ to maximize $\mathbf{c}^{\intercal} \mathbf{x}$. 
\end{theorem}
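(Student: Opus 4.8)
The plan is to reduce the claim for an arbitrary norm $\eta$ to a quantitatively robust version of the Euclidean case and then to exhibit a suitably deformed Klee--Minty-type cube. Since $\eta$ is continuous and positive away from the origin, the numbers $m := \min_{\lVert\mathbf{x}\rVert_2=1}\eta(\mathbf{x})$ and $M := \max_{\lVert\mathbf{x}\rVert_2=1}\eta(\mathbf{x})$ are positive reals, and $m\lVert\mathbf{x}\rVert_2 \le \eta(\mathbf{x}) \le M\lVert\mathbf{x}\rVert_2$ for all $\mathbf{x}$. Hence if $\mathbf{d}$ and $\mathbf{d}'$ are two $\mathbf{c}$-improving edge directions at a vertex and $\frac{\mathbf{c}^{\intercal}\mathbf{d}}{\lVert\mathbf{d}\rVert_2} > \frac{M}{m}\,\frac{\mathbf{c}^{\intercal}\mathbf{d}'}{\lVert\mathbf{d}'\rVert_2}$, then $\frac{\mathbf{c}^{\intercal}\mathbf{d}}{\eta(\mathbf{d})} > \frac{\mathbf{c}^{\intercal}\mathbf{d}'}{\eta(\mathbf{d}')}$. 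So it suffices to build, for each $n$, a combinatorial cube $C_n\subset\R^n$ and $\mathbf{c}\in\R^n$ carrying a monotone path $\mathbf{v}^0,\dots,\mathbf{v}^{2^n-1}$ through all $2^n$ vertices, from the $\mathbf{c}$-minimal vertex to the $\mathbf{c}$-maximal vertex, such that at each $\mathbf{v}^i$ the path step $\mathbf{d}^i := \mathbf{v}^{i+1}-\mathbf{v}^i$ has strictly larger normalized improvement $\frac{\mathbf{c}^{\intercal}\mathbf{d}^i}{\lVert\mathbf{d}^i\rVert_2}$ than $\frac{M}{m}$ times that of every other $\mathbf{c}$-improving edge at $\mathbf{v}^i$: the argmax in the pivot rule is then uniquely $\mathbf{v}^{i+1}$ at every step, and the rule traces exactly this path.

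To build $C_n$, start from $C_1 = [0,1]$ with path $0\to 1$, and recursively realize $C_n$ as a deformed prism over $C_{n-1}$ --- a bottom facet affinely isomorphic to a rescaled copy of $C_{n-1}$, a top facet obtained from it by a shear and a reflection, and $2^{n-1}$ connecting edges --- where the deformation depends on a large scaling parameter $\lambda_n$, a tiny shear parameter $\epsilon_n$, and a new objective coordinate $c_n$. Take $\mathbf{c} = (\bar{\mathbf{c}}, c_n)$ with $c_n$ large enough that every top vertex beats every bottom vertex. The path on $C_n$ runs the inductively given path through the bottom facet, crosses the single connecting edge at its endpoint, and then runs the reversed path through the top facet; the shear is precisely what makes this reversed traversal $\mathbf{c}$-monotone, as in the Klee--Minty cube, and by construction $C_n$ is combinatorially an $n$-cube.

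The verification is an induction on $n$: one checks the robust normalized-improvement inequality for $\mathbf{c}$ on $C_n$ given that it holds for $\bar{\mathbf{c}}$ on $C_{n-1}$. Only three types of path edge occur at a vertex of $C_n$ --- one inside the bottom facet, the single crossover edge, and one inside the top facet --- and in each case the path direction and all competing $\mathbf{c}$-improving directions are explicit functions of $\lambda_2,\dots,\lambda_n$, $\epsilon_2,\dots,\epsilon_n$, $c_2,\dots,c_n$, so the required inequalities amount to a fixed finite system of strict polynomial inequalities. Choosing the parameters in a rapidly separating hierarchy (each $\lambda_i$ astronomically large, each $\epsilon_i$ astronomically small, each $c_i$ astronomically large, relative to the data chosen at the earlier levels) forces each path edge to dominate, by a factor as large as we please, the normalized improvement of every competing improving edge of comparable or greater length, while the competing ``shortcut'' edges --- those carrying a large objective increase --- are made geometrically long; we push the parameters until the dominance factor exceeds $M/m$.

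I expect the main obstacle to be exactly making the normalized-improvement margin as large as the prescribed number $M/m$, rather than merely positive as in the Euclidean analyses of Goldfarb and Sit. Concretely, one must rule out a path edge whose normalized improvement is forced to be comparable to that of some competing improving edge regardless of the parameters; that is, every relevant ratio of normalized improvements must be a positive power of a tunable parameter. This is the reason for including the per-level scalings $\lambda_i$ on top of the shears $\epsilon_i$ of the bare Klee--Minty cube: they let each family of path edges be stretched against the potential shortcuts independently at every level. Checking that these stretchings preserve $\mathbf{c}$-monotonicity and the combinatorial cube structure, and carrying out the bookkeeping of the parameter hierarchy, is the bulk of the work; granting it, the reduction of the first paragraph promotes the Euclidean statement to the theorem for the arbitrary norm $\eta$.
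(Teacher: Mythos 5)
Your opening reduction is correct, and it is not the route the paper takes. By norm equivalence there are $0<m\le M$ with $m\lVert\mathbf{x}\rVert_2\le\eta(\mathbf{x})\le M\lVert\mathbf{x}\rVert_2$, so a cube on which the path edge beats every competing improving edge in \emph{Euclidean} normalized improvement by a factor exceeding $M/m$ would indeed force the $\eta$-steepest edge rule onto the long path. The paper never needs such a quantitative Euclidean margin: it takes the Goldfarb cube, whose Hamiltonian path is a shadow path for some auxiliary vector $\mathbf{w}$, and applies a linear map $A_k$ compressing all directions orthogonal to $\mathbf{w}$ by $1/k$. For $k$ large every edge direction is nearly parallel to $\mathbf{w}$, so by continuity and positive homogeneity $\eta(A_k(\mathbf{u}-\mathbf{v}))\approx\eta(\mathbf{w})\,\mathbf{w}^\intercal(\mathbf{u}-\mathbf{v})$, and the $\eta$-steepest edge rule on $A_k(C_n)$ degenerates into the shadow rule for $\mathbf{w}$, which by construction traverses all vertices. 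That is a soft limit argument reusing the known example as a black box; it does not even use the triangle inequality.

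The gap in your plan is that essentially all of its technical content sits in the unproved claim that there is a combinatorial cube whose Hamiltonian monotone path dominates, at every vertex, every competing improving edge by an \emph{arbitrary prescribed factor} $R=M/m$ in Euclidean normalized improvement. This is strictly stronger than what Goldfarb and Sit prove (they only need, and only get, domination by a factor greater than $1$), and you explicitly ``grant'' the parameter bookkeeping that would establish it. Note also that you cannot borrow the paper's deformation for this step: under $A_k$ the normalized improvements of the path edge and of its competitors both converge to fixed shadow slopes, so their ratio tends to a fixed constant slightly larger than $1$, never to $R$. So the $R$-robust cube really must be built from scratch with your level-by-level scalings, and one must verify convexity, simplicity, the cube combinatorics, the monotonicity of the reversed traversal of each top facet, and the domination inequalities at the crossover and starting vertices (where all $n$ ``vertical'' shortcut edges are improving). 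I believe this is feasible --- the dangerous competitors are the prism edges, whose objective gains are pinned near the new objective coefficient while their lengths are freely stretchable --- but as written the proposal establishes only the reduction, not the theorem; the paper's argument shows how to avoid the quantitative construction altogether.
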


This lower bound applies to all the $p$-steepest edge pivot rules studied in \cite{PSteepestEdge} and normalized weight pivot rules from \cite{pivotpolytopes} for which the weight is the linear objective function and the normalization does not depend on the the linear program. Our construction yields exponential examples for each norm, but not an example that fails for all norms simultaneously. However, we can construct such an example for many norms. Namely, we call a norm $\eta$ \textbf{regular} if $\eta(e_{i}) = 1$ for all $i \in [n]$. Then there is an example that fails for all regular norms simultaneously:

\begin{theorem}
\label{thm:allnormsatonce}
For each $n \in \mathbb{N}$, there exists an $n$-dimensional combinatorial cube $C_{n}$ and $\mathbf{c} \in \mathbb{R}^{n}$ such that every pivot rule of the form: 
\[\mathbf{v}^{i+1} = \argmax_{\mathbf{u} \in N_{\mathbf{c}}(\mathbf{v}^{i})} \frac{\mathbf{c}^{\intercal}(\mathbf{u} - \mathbf{v}^{i})}{\eta(\mathbf{u} -\mathbf{v}^{i})}, \]
where $N_{\mathbf{c}}(\mathbf{v}^{i})$ is the set of $\mathbf{c}$-improving neighbors of $\mathbf{v}^{i}$ and $\eta$ is a regular norm, follows a path through all vertices of $C_{n}$ to maximize $\mathbf{c}^{\intercal} \mathbf{x}$. In particular, the $p$-steepest edge simplex method will follow an exponential length path for all choices of $p$.
\end{theorem}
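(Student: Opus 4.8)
The plan is to deduce Theorem~\ref{thm:allnormsatonce} from the construction behind Theorem~\ref{thm:anynorm} together with one elementary observation: a regular norm is pinned down, up to a small multiplicative error, on every vector that nearly points along a coordinate axis. First I would record the following pinning lemma: if $\mathbf w=\mu e_i+\mathbf s$ with $\mu\neq 0$ and $\|\mathbf s\|_1\le\delta|\mu|$, then $(1-\delta)|\mu|\le\eta(\mathbf w)\le(1+\delta)|\mu|$ for \emph{every} regular norm $\eta$. This is immediate from the triangle inequality and homogeneity, since $\eta(\mathbf w)\le|\mu|\eta(e_i)+\eta(\mathbf s)\le|\mu|+\|\mathbf s\|_1$, $\eta(\mathbf w)\ge|\mu|\eta(e_i)-\eta(\mathbf s)\ge|\mu|-\|\mathbf s\|_1$, and $\eta(\mathbf s)\le\sum_j|s_j|\eta(e_j)=\|\mathbf s\|_1$. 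No comparable lower bound holds for a generic direction --- a regular norm can be made arbitrarily small on $(1,\dots,1)$ --- which is precisely why the argument is forced to use a polytope all of whose edge directions stay close to coordinate axes.

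Next I would revisit the combinatorial cube produced in the proof of Theorem~\ref{thm:anynorm}, specialized to the Euclidean norm (the classical Goldfarb--Sit case): a combinatorial $n$-cube $C_n=C_n(\epsilon)$ with $2n$ facets, a designated start vertex $\mathbf v^0$, and an objective $\mathbf c$, for which the Euclidean steepest-edge rule is forced along a Hamiltonian monotone path $\mathbf v^0,\mathbf v^1,\dots,\mathbf v^{2^n-1}$. From the explicit inequality description I would extract two quantitative facts, valid once $\epsilon$ is small. First, \emph{near-axis-parallelness}: every edge vector of $C_n(\epsilon)$ can be written $\mu e_i+\mathbf s$ with $\|\mathbf s\|_1\le\delta|\mu|$ for a bound $\delta=\delta(\epsilon)$ with $\delta\to 0$ as $\epsilon\to 0$ (the vertices lie within $O(\epsilon)$ of $\{0,1\}^n$, so flipping one coordinate perturbs the remaining ones by $O(\epsilon)$). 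Second, a \emph{decisive margin}: at each $\mathbf v^t$, writing $\mathbf w^{\ast}=\mu^{\ast}e_i+\mathbf s^{\ast}$ for the path edge leaving $\mathbf v^t$ and $\lambda^{\ast}:=|\mu^{\ast}|$, and $\mathbf w=\mu e_j+\mathbf s$ for any other $\mathbf c$-improving edge at $\mathbf v^t$ with $\lambda:=|\mu|$, one has $\mathbf c^\intercal\mathbf w^{\ast}/\lambda^{\ast}\ge(1+\gamma)\,\mathbf c^\intercal\mathbf w/\lambda$ for a constant $\gamma>0$ that does not degrade as $\epsilon\to 0$.

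Granting these, the theorem follows by combining them with the pinning lemma. Fix any regular norm $\eta$ and any $\mathbf v^t$. For every $\mathbf c$-improving edge $\mathbf w\neq\mathbf w^{\ast}$ at $\mathbf v^t$, the path edge has steepest-edge ratio $\mathbf c^\intercal\mathbf w^{\ast}/\eta(\mathbf w^{\ast})\ge\mathbf c^\intercal\mathbf w^{\ast}/((1+\delta)\lambda^{\ast})\ge(1+\gamma)\,\mathbf c^\intercal\mathbf w/((1+\delta)\lambda)$, using $\eta(\mathbf w^{\ast})\le(1+\delta)\lambda^{\ast}$ and the decisive margin; meanwhile $\mathbf w$ has ratio $\mathbf c^\intercal\mathbf w/\eta(\mathbf w)\le\mathbf c^\intercal\mathbf w/((1-\delta)\lambda)$, using $\mathbf c^\intercal\mathbf w>0$ and $\eta(\mathbf w)\ge(1-\delta)\lambda$. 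Since $(1+\gamma)/(1+\delta)>1/(1-\delta)$ exactly when $\delta<\gamma/(2+\gamma)$, choosing $\epsilon$ small enough that $\delta(\epsilon)<\gamma/(2+\gamma)$ makes $\mathbf w^{\ast}$ the unique maximizer of the $\eta$-steepest-edge ratio at every $\mathbf v^t$. Hence for every regular norm the rule traces the same Hamiltonian path $\mathbf v^0,\dots,\mathbf v^{2^n-1}$, and since each $\|\cdot\|_p$ is regular this in particular forces an exponential path for the $p$-steepest edge method for all $p$.

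I expect the main obstacle to be the decisive-margin fact, and more precisely securing it with $\gamma$ bounded away from $0$ uniformly in $\epsilon$ at the same time as $\delta(\epsilon)\to 0$: the ``shear'' that makes the edges ever more axis-parallel and the ``margin'' needed to beat every competing edge pull against each other, and reconciling them --- through a careful joint choice of the box proportions and the objective $\mathbf c$ --- is the technical heart of the proof. The reason to expect this to work is that in the Goldfarb--Sit analysis each ratio comparison at a vertex is between an edge pointing steeply uphill and edges that are comparatively long and shallow, a gap that is structural rather than tied to $\epsilon$; the task is to rephrase it in the normalized, coordinate-length form used above and to check it holds at \emph{every} vertex of the forced path, not merely typical ones. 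Everything else --- that $C_n$ is a combinatorial cube with $2n$ facets and that the forced path is Hamiltonian --- is inherited directly from Theorem~\ref{thm:anynorm}.
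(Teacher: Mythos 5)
Your pinning lemma is exactly the estimate the paper uses to make its choice of $\delta$ norm-independent ($|\eta(\mathbf{w}+\mathbf{y})-\eta(\mathbf{w})|\le\eta(\mathbf{y})\le\|\mathbf{y}\|_{1}$ for regular $\eta$), so that part is on target. The genuine gap is the ``decisive margin'' fact, and I do not think it is merely an unverified technicality: as stated, it is incompatible with your ``near-axis-parallelness'' fact. If the vertices of $C_{n}(\epsilon)$ lie within $O(\epsilon)$ of $\{0,1\}^{n}$ and the forced monotone path is Hamiltonian, then the $\mathbf{c}$-ordering of the vertices is a Gray-code ordering, so every coordinate $j$ that is flipped more than once along the path (at least $n-1$ of them) is flipped once in each direction with the objective increasing both times; since flipping coordinate $j$ changes $\mathbf{c}^{\intercal}\mathbf{x}$ by $\pm c_{j}+O(\epsilon\|\mathbf{c}\|)$, this forces $|c_{j}|=O(\epsilon)$ for all such $j$. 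But your normalized ratios are $\mathbf{c}^{\intercal}\mathbf{w}/\lambda=\pm c_{j}+\mathbf{c}^{\intercal}\mathbf{s}/|\mu|$ with correction term $O(\delta(\epsilon)\|\mathbf{c}\|_{\infty})=O(\epsilon)$ as well. So the leading terms that are supposed to produce the margin and the error terms you are trying to suppress are both $\Theta(\epsilon)$; you cannot send $\delta(\epsilon)\to 0$ while keeping $\gamma$ bounded away from $0$, because the information that distinguishes competing edges lives entirely in the vanishing corrections. (Equivalently: in the limit $\epsilon\to 0$ your rule degenerates to ``flip the improving coordinate with the largest $|c_{j}|$,'' a fixed priority order, and the tie-breaking needed for a Hamiltonian path is an $O(\epsilon)$ effect.) You would also need to verify that the Goldfarb--Sit cube actually has near-axis-parallel edges, which is asserted but not checked.

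The paper escapes this tension by compressing toward a \emph{single} direction rather than toward the $n$ coordinate axes. It takes a Goldfarb cube on which the shadow rule with auxiliary vector $\mathbf{w}$ traverses all vertices, changes coordinates so $\mathbf{w}=e_{1}$ (hence $\eta(\mathbf{w})=1$ for every regular norm), and applies a map $A_{k}$ shrinking the orthogonal complement of $\mathbf{w}$ by $1/k$. Then $\eta(A_{k}(\mathbf{u}-\mathbf{v}))\to\eta(\mathbf{w})\,\mathbf{w}^{\intercal}(\mathbf{u}-\mathbf{v})$, so every $\eta$-steepest-edge ratio converges to the shadow slope $\mathbf{c}^{\intercal}(\mathbf{u}-\mathbf{v})/\mathbf{w}^{\intercal}(\mathbf{u}-\mathbf{v})$ computed on the \emph{original, fixed} cube; the margins between these slopes are a fixed $\varepsilon>0$ independent of $k$, because $A_{k}$ does not alter $\mathbf{c}^{\intercal}(\mathbf{u}-\mathbf{v})$ or $\mathbf{w}^{\intercal}(\mathbf{u}-\mathbf{v})$ in the relevant formulation. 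In other words, the limiting normalization there is the nonconstant linear functional $\mathbf{w}^{\intercal}(\cdot)$, which already encodes the Hamiltonian path, whereas your limiting normalization (dominant coordinate magnitude) collapses all the distinguishing information. To repair your argument you would essentially have to redo it with a single compression direction, at which point it becomes the paper's proof.
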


This is a mild assumption on our norm in that any norm satisfies it up to a precomposing with a positive diagonal matrix. Our strategy to prove all of our main theorems is to take a linear program with a single exponential length shadow path such as in \cite{GoldfarbCube} and modify it.

\section{Background}

We rely on the simplex method at the level of \cite{BertsimasBook} and then discuss pivot rules. The simplex method is generally defined for standard form linear programs $\max(\mathbf{c}^{\intercal} \mathbf{x})$ subject to $A \mathbf{x} = \mathbf{b}, \mathbf{x} \geq 0$ for $\mathbf{x} \in \mathbb{R}^{m}$ and $A$ an $n \times m$ matrix. A basic feasible solution $\mathbf{v} \in \mathbb{R}^{n}$ is any vector satisfying  $A \mathbf{v} = \mathbf{b}$, $\mathbf{v} \geq 0$, and $|\text{supp}(\mathbf{v})| \leq m$. For our purposes here, we may assume without loss of generality that the linear program is non-degenerate meaning that $|\text{supp}(\mathbf{v})| = m$ for all basic feasible solutions $\mathbf{v}$. Then the vertices of the polytope $P = \{\mathbf{x}: A \mathbf{x} = \mathbf{b}, \mathbf{x} \geq \mathbf{0}\}$ correspond to basic feasible solutions, and two basic feasible solutions $\mathbf{u}$ and $\mathbf{v}$ differ by an edge if $|\text{supp}(\mathbf{u}) \cap \text{supp}(\mathbf{v})| = m-1$.

We also assume the simplex method is given a starting basic feasible solution $\mathbf{v}$ with support $B$. We call $B$ the corresponding \textbf{basis}. Finding an initial basis is referred to as phase 1 of the simplex method. For our analysis, we assume the simplex method could start from any arbitrary initial basis. Hence, to prove lower bounds on the simplex method's performance, it suffices to find one initial basis from which the number of steps taken is exponential. A \textbf{pivot} is a change from the current basic feasible solution to another. Since we assumed the linear program is nondegenerate, pivoting corresponds to walking along an edge of the polytope. Each possible pivot is determined uniquely by a new choice of variable to add to the basis. Namely, let $A_{B}$ be the $n \times m$ matrix with zeroes in all coordinates other than those in the columns indexed by $B$ and equal to $A$ otherwise. Then, from the standard theory of the simplex method, for each $i \in [m] \setminus B$, there exists a unique $\lambda_{i} > 0$ such that $\mathbf{v} + \lambda_{i}(e_{i} - A_{B}^{-1} A_{i})$ is a basic feasible solution with support given by $i$ and all but one element of $B$. Hence, any choice of $i \in [n] \setminus B$ gives an edge, and all edges arise in that way. 

The simplex method solves the linear program by pivoting from basis to basis such that for each new $i$ added to a basis, $\mathbf{c}^{\intercal}(e_{i} - A_{B}^{-1} A_{i}) > 0$. If no such $i$ exists, the current basis is optimal. A \textbf{pivot rule} is a way of choosing $i$. For example, Dantzig's pivot rule says to choose $i$ maximizing $\mathbf{c}^{\intercal} (e_{i} - A_{B}^{-1} A_{i})$, the greatest improvement rule chooses the maximizer of $\mathbf{c}^{\intercal}\lambda_{i} (e_{i} - A_{B}^{-1} A_{i})$, and the steepest edge pivot rule chooses the maximizer of $\frac{\mathbf{c}^{\intercal} (e_{i} - A_{B}^{-1} A_{i})}{\|e_{i} - A_{B}^{-1} A_{i}\|_{2}}$.

Here we consider a generalized steepest edge rule, which chooses $i$ maximizing $\frac{\mathbf{c}^{\intercal} (e_{i} - A_{B}^{-1} A_{i})}{\eta(e_{i} - A_{B}^{-1} A_{i})}$ for some choice of norm $\eta$. Recall that a norm $\eta: \mathbb{R}^{n} \to \mathbb{R}$ is any function satisfying:
\begin{itemize}
    \item (Positivity) $\eta(\mathbf{x}) \geq 0$ with equality if and only if $\mathbf{x} = \mathbf{0}$.
    \item (Positive Homogeneity) $\eta(\lambda \mathbf{x}) = |\lambda| \eta(\mathbf{x})$ for all $\lambda \in \mathbb{R}$.
    \item (Triangle Inequality) $\eta(\mathbf{x} + \mathbf{y}) \leq \eta(\mathbf{x}) + \eta(\mathbf{y})$ for all $\mathbf{x}, \mathbf{y} \in \mathbb{R}^{n}$.
\end{itemize}

Shadow pivot rules require more explanation. Namely, there are many choices of a shadow rule. Each is given by a choice of $\mathbf{w}_{j} > 0$ for each $j \in [m] \setminus B_{0}$ for the initial basis $B_{0}$ to yield a vector $\mathbf{w} = \sum_{j \in [m] \setminus B_{0}} \mathbf{w}_{j} e_{j}$. Then each subsequent basis after a basis $B$ is chosen by maximizing $\frac{\mathbf{c}^{\intercal} (e_{i} - A_{B}^{-1} A_{i})}{\mathbf{w}^{\intercal}(e_{i} - A_{B}^{-1}A_{i})}$. Note that $\mathbf{w}$ is fixed in the first step and does not change later. Thus, computing the shadow pivot rule at each step may be done by the simplex method nearly as efficiently as computing Dantzig's original pivot rule. In particular, unlike greatest improvement, it does not require computing $\lambda_{i}$. 

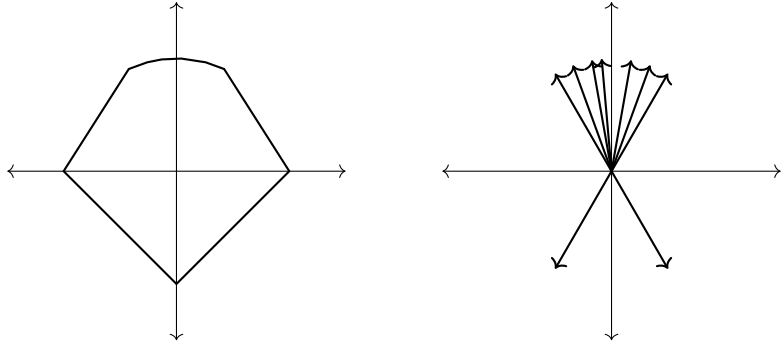
\begin{figure}
    \centering
    \[
\begin{tikzpicture}[scale = 1.5]
    \draw[<->] (-1.5,0) -- (1.5,0);
    \draw[<->] (0,-1.5) -- (0,1.5);
    \draw[thick] (0:1) -- (65:1) -- (75:1) -- (87.5:1) -- (97.5:1) -- (105:1) --(115:1) -- (180:1) -- (270:1) -- cycle;
\end{tikzpicture}
\hskip.5in
\begin{tikzpicture}[scale = 1.5]
    \draw[<->] (-1.5,0) -- (1.5,0);
    \draw[<->] (0,-1.5) -- (0,1.5);
    
    
    \draw[thick, ->] (0,0) -- (60:1);
    \draw[thick, ->] (0,0) -- (70:1);
    \draw[thick, ->] (0,0) -- (80:1);
    \draw[thick, ->] (0,0) -- (95:1);
    \draw[thick, ->] (0,0) -- (100:1);
    \draw[thick, ->] (0,0) -- (110:1);
    \draw[thick, ->] (0,0) -- (120:1);
    \draw[thick, ->] (0,0) -- (240:1);
    \draw[thick, ->] (0,0) -- (300:1);
\end{tikzpicture}\]
    \caption{Pictured is a polygon on the left with its corresponding normal fan on the right.}
    \label{fig:NormalFan}
\end{figure}

It remains to explain the connection to shadows. Consider the map $\pi: \mathbb{R}^{m} \to \mathbb{R}^{2}$ defined by the map $\pi(\mathbf{x}) = (\mathbf{w}^{\intercal} \mathbf{x}, \mathbf{c}^{\intercal} \mathbf{x})$. Then $\pi(P)$ is a polygon. Note that we call the optimum of a linear program for maximizing or minimizing $\mathbf{c}^{\intercal} \mathbf{x}$, a $\mathbf{c}$-maximizer or $\mathbf{c}$-minimizer respectively. We may assume without loss of generality that the $\mathbf{c}$-maximizer is unique. Furthermore, by construction, the initial vertex $\mathbf{u}$ is a $\mathbf{w}$-minimizer, since the support of $\mathbf{w}$ is $[n] \setminus B_{0}$. Hence, under the projection $\pi(\mathbf{u})$ minimizes the first coordinate. Similarly $\pi(\mathbf{v})$ maximizes the second coordinate, where $\mathbf{v}$ is the $\mathbf{c}$-maximizer. Then the shadow pivot rule maximizes $\frac{\mathbf{c}^{\intercal}(\mathbf{u} - \mathbf{v})}{\mathbf{w}^{\intercal}(\mathbf{u} -\mathbf{v})}$ over all $\mathbf{c}$-improving neighbors of $\mathbf{v}$. The quantity $\frac{\mathbf{c}^{\intercal}(\mathbf{u} - \mathbf{v})}{\mathbf{w}^{\intercal}(\mathbf{u} -\mathbf{v})}$ is precisely the slope of the edge in the projection $\pi$. An edge of maximal slope must stay on the boundary of the polygon. Hence, the pivot rule will follow the path on the boundary of the polygon from the point minimizing the first coordinate to the point maximizing the second. As a result, the total number of vertices of the shadow yields an upper bound on the length of the path.

Finally, a key tool in our proofs will be a third interpretation of shadow pivot rules on a dual view of polytopes. In fact, Gass and Saaty originally introduced the shadow simplex method in 1955 in this framework \cite{GassSaaty1955} as parametric linear programming. Namely, one can form a path from the starting vertex $\mathbf{u}$ to the optimum $\mathbf{v}$ by taking the path consisting of every vertex that maximizes $\lambda \mathbf{w} + (1-\lambda) \mathbf{c}$ for some $\lambda \in [0,1]$, where $\mathbf{w}$ and $\mathbf{c}$ are uniquely maximized at $\mathbf{u}$ and $\mathbf{v}$ respectively. This path is precisely the path followed by the shadow simplex method for $\pi(\mathbf{x}) = (-\mathbf{w}^{\intercal} \mathbf{x}, \mathbf{c}^{\intercal} \mathbf{x})$ and is best understood in terms of the normal fan of the polytope. Namely, given a polytope $P$, for each vertex $\mathbf{v}$, there is a cone given by $C_{\mathbf{v}} = \{\mathbf{z} \in \mathbb{R}^{n}: \mathbf{z}^{\intercal} \mathbf{v} \geq \mathbf{z}^{\intercal} \mathbf{x} \text{ for all }\mathbf{x} \in P\}$. That is $C_{\mathbf{v}}$ is the cone of all vectors maximized at $\mathbf{v}$. The extreme rays of $C_{\mathbf{v}}$ are precisely the facet defining inequalities tight at $\mathbf{v}$. The set of all such cones is called the \textbf{normal fan} of $P$. See Figure \ref{fig:NormalFan} for an example. Then, by definition, the shadow path for $\mathbf{w}$ consists of all vertices whose normal cones intersect the line segment from $\mathbf{w}$ to $\mathbf{c}$. See Figure \ref{fig:ShadowExamples} for an illustration of shadow paths from this perspective. Fundamentally for this work, the choice of $\mathbf{w}$ is not unique, and one may choose any $\mathbf{w}$ in the interior of $C_{\mathbf{u}}$ to choose a shadow path.

\begin{figure}
    \centering
\[
\begin{tikzpicture}[scale = 1.5]
    \draw[<->] (-1.5,0) -- (1.5,0);
    \draw[<->] (0,-1.5) -- (0,1.5);
    \draw[thick, violet] (0:1) -- (65:1) -- (75:1) -- (87.5:1) -- (97.5:1) -- (105:1) --(115:1) -- (180:1); 
    \draw[thick] (180:1) -- (270:1) -- (0:1);
    \draw (0:1) node[red, circle,fill, inner sep = 1.5pt] {};
    \draw (180:1) node[red, circle, fill, inner sep = 1.5pt] {};
    \draw (-1.2,.2) node[red] {$\mathbf{u}$};
    \draw (1.2,.2) node[red] {$\mathbf{v}$};
\end{tikzpicture}
\hskip.5in
\begin{tikzpicture}[scale = 1.5]
    \draw[<->] (-1.5,0) -- (1.5,0);
    \draw[<->] (0,-1.5) -- (0,1.5);
    
    
    \draw[blue, thick, dashed] (15:1) -- (150:1);
    \draw[red, thick, ->] (0,0) -- (15:1) node[right] {$\mathbf{c}$};
    \draw[thick, violet, ->] (0,0) -- (60:1);
    \draw[thick, violet, ->] (0,0) -- (70:1);
    \draw[thick, violet, ->] (0,0) -- (80:1);
    \draw[thick, violet, ->] (0,0) -- (95:1);
    \draw[thick, violet, ->] (0,0) -- (100:1);
    \draw[thick, violet, ->] (0,0) -- (110:1);
    \draw[thick, violet, ->] (0,0) -- (120:1);
    \draw[thick, ->] (0,0) -- (240:1);
    \draw[thick, ->] (0,0) -- (300:1);
    \draw[red, thick, ->] (0,0) -- (150:1) node[left] {$\mathbf{w}$};
\end{tikzpicture}\]

\[
\begin{tikzpicture}[scale = 1.5]
    \draw[<->] (-1.5,0) -- (1.5,0);
    \draw[<->] (0,-1.5) -- (0,1.5);
    \draw[thick] (0:1) -- (65:1) -- (75:1) -- (87.5:1) -- (97.5:1) -- (105:1) --(115:1) -- (180:1); 
    \draw[thick, violet] (180:1) -- (270:1) -- (0:1);
    \draw (0:1) node[red, circle,fill, inner sep = 1.5pt] {};
    \draw (-1.2,.2) node[red] {$\mathbf{u}$};
    \draw (1.2,.2) node[red] {$\mathbf{v}$};
    \draw (180:1) node[red, circle, fill, inner sep = 1.5pt] {};
\end{tikzpicture}
\hskip.5in
\begin{tikzpicture}[scale = 1.5]
    \draw[<->] (-1.5,0) -- (1.5,0);
    \draw[<->] (0,-1.5) -- (0,1.5);
    
    
    \draw[blue, thick, dashed] (210:1) -- (15:1);
    \draw[red, thick, ->] (0,0) -- (15:1) node[right] {$\mathbf{c}$};
    \draw[thick, ->] (0,0) -- (60:1);
    \draw[thick, ->] (0,0) -- (70:1);
    \draw[thick, ->] (0,0) -- (80:1);
    \draw[thick,  ->] (0,0) -- (95:1);
    \draw[thick,  ->] (0,0) -- (100:1);
    \draw[thick,  ->] (0,0) -- (110:1);
    \draw[thick, ->] (0,0) -- (120:1);
    \draw[thick, violet, ->] (0,0) -- (240:1);
    \draw[thick, violet, ->] (0,0) -- (300:1);
    \draw[red, thick, ->] (0,0) -- (210:1) node[left] {$\mathbf{w}$};
\end{tikzpicture}\]
    \caption{Pictured on the left is the shadow path on a polygon induced by $\mathbf{w}$ starting from the $\mathbf{w}$-minimal vertex $\mathbf{u}$ and walking to the $\mathbf{c}$-maximal vertex $\mathbf{v}$. On the right is the normal fan of the polygon and a depiction of the shadow path given by interpolating between $\mathbf{w}$ and $\mathbf{c}$. As one can see from this picture, different choices of $\mathbf{w}$ may yield different shadow paths.}
    \label{fig:ShadowExamples}
\end{figure}
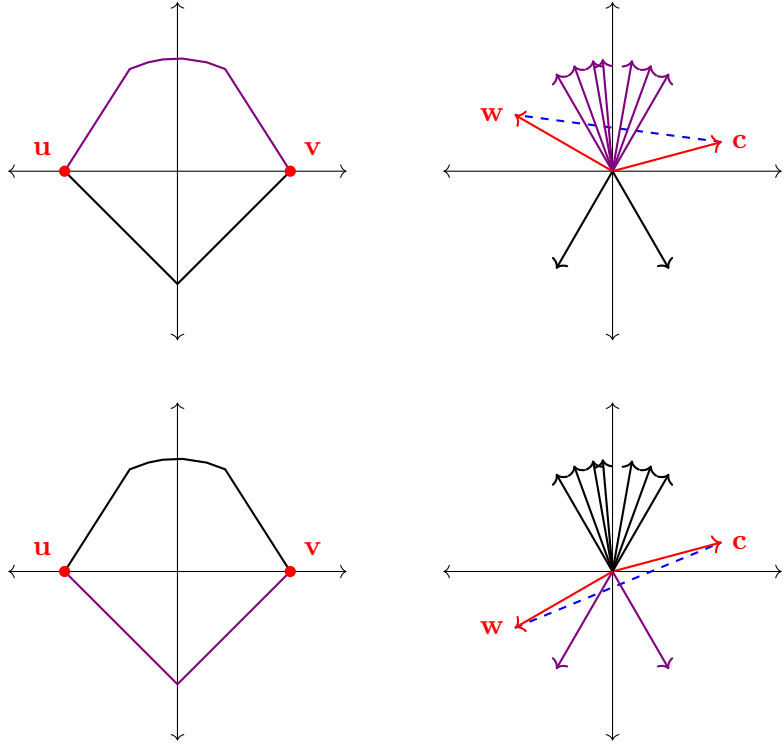
\section{Exponential Sized Shadows}

The main idea of the proof is as follows. Start with a polytope $P$ with an exponentially large shadow such as the Goldfarb cube \cite{GoldfarbCube}. Then, by the parametric definition of the shadow pivot rule, there exist $\mathbf{w}, \mathbf{c} \in \mathbb{R}^{n}$ such that $\mathbf{u}$ and $\mathbf{v}$ are a unique $\mathbf{w}$-maximizer and a unique $\mathbf{c}$-maximizer respectively. Using a polyhedral construction, we can construct a new polytope $P'$ with the same normal fan as $P$ except the normal cones $C_{\mathbf{u}}$ and $C_{\mathbf{v}}$ are subdivided. In particular, the resulting polytope has vertices $\mathbf{a}$ and $\mathbf{b}$ such that $\mathbf{w} \in C_{\mathbf{a}} \subset C_{\mathbf{u}}$ and $\mathbf{c} \in C_{\mathbf{b}} \subseteq C_{\mathbf{v}}$. We construct these new cones in such a way that the cones $C_{\mathbf{a}}$ and $C_{\mathbf{b}}$ are arbitrarily thin.  Then any shadow path from $\mathbf{a}$ to $\mathbf{b}$ must correspond to a line segment between points in these two thin cones.

By making the cones sufficiently thin, we may ensure that any such line segment must intersect every cone that the line segment from $\mathbf{w}$ to $\mathbf{c}$ intersects and must therefore always intersect exponentially many cones. Hence, any shadow path between the vertices corresponding to those two cones must be of exponential length. See Figure \ref{fig:Subdividing} for an illustration of the proof idea. In \cite{DiscCurv, Bonifas, BrunschRoglin}, each result relies on an assumption that all normal cones are sufficiently wide to ensure the existence of small shadows. Similarly, to find large shadows, we leverage that certain normal cones are thin.

The first key underlying observation from polyhedral theory is the normal fan interpretation of cutting off precisely one vertex and no other faces with the addition of a new inequality. Note that the extreme rays of the normal cone at a vertex are given precisely by the set of facet defining inequalities tight at the vertex as is noted in Chapter 7 of \cite{zieg}. Recall also that an $n$-dimensional polytope is \textbf{simple} if each vertex is contained in precisely $n$ facets and incident to precisely $n$ edges. Here we also identify a fan by its set of maximal dimensional cones $\mathcal{N}$ and a cone by its extreme rays.

\begin{lem}
\label{lem:subdiv}
Let $P$ be an $n$-dimensional simple polytope with normal fan $\mathcal{N}$, and choose a cone $C \in \mathcal{N}$ generated by $\mathbf{x}_{1}, \mathbf{x}_{2}, \dots, \mathbf{x}_{n}$. Let $\mathbf{w}$ be on the interior of $C$. Then there is a polytope $P'$ with normal fan $\mathcal{N}'$ such that $\mathcal{N}' = \mathcal{N} \setminus \{C\} \cup \{C_{1}, \dots, C_{m}\}$, where $C_{i}$ is the cone generated by $\mathbf{x}_{1}, \mathbf{x}_{2}, \dots, \mathbf{x}_{i-1}, \mathbf{w}, \mathbf{x}_{i+1}, \dots, \mathbf{x}_{n}$. In particular $P'$ has precisely one more facet than $P$.
\end{lem}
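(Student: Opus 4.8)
The plan is to work with $P$ in an inequality description and produce $P'$ by adding a single new inequality that "cuts off" the vertex $\mathbf{v}$ dual to the cone $C$, but does so at just the right angle so that the truncation introduces exactly the cones $C_{1},\dots,C_{n}$ promised. First I would fix the vertex $\mathbf{v}$ of $P$ whose normal cone is $C$; since $P$ is simple, $\mathbf{v}$ lies on exactly $n$ facets, say with outer normals $\mathbf{x}_{1},\dots,\mathbf{x}_{n}$ (the extreme rays of $C$) and defining inequalities $\mathbf{x}_{j}^{\intercal}\mathbf{z}\le b_{j}$. These $\mathbf{x}_{j}$ form a basis of $\R^{n}$, so I may write $\mathbf{w}=\sum_{j}\alpha_{j}\mathbf{x}_{j}$, and since $\mathbf{w}\in\operatorname{int}(C)$ all $\alpha_{j}>0$ (after rescaling the $\mathbf{x}_{j}$, which doesn't change the fan). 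The new inequality will be $\mathbf{w}^{\intercal}\mathbf{z}\le \beta$ where $\beta$ is chosen slightly smaller than $\mathbf{w}^{\intercal}\mathbf{v}$; more precisely, I would pick $\beta$ close enough to $\mathbf{w}^{\intercal}\mathbf{v}$ that the hyperplane $\{\mathbf{w}^{\intercal}\mathbf{z}=\beta\}$ meets $P$ only in the interior of the $n$ edges of $P$ emanating from $\mathbf{v}$ and crosses none of the other facets of $P$ — such a $\beta$ exists because $\mathbf{v}$ is a strict $\mathbf{w}$-maximizer over $P$ and there are finitely many other vertices.

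The next step is to check that $P'=P\cap\{\mathbf{w}^{\intercal}\mathbf{z}\le\beta\}$ has the claimed normal fan. Combinatorially, the truncation deletes $\mathbf{v}$ and replaces it with a new simplex facet whose $n$ vertices $\mathbf{v}_{1},\dots,\mathbf{v}_{n}$ lie on the $n$ old edges at $\mathbf{v}$; here $\mathbf{v}_{i}$ is the vertex that lies on all the old facets through $\mathbf{v}$ except the $i$-th, together with the new facet $\mathbf{w}^{\intercal}\mathbf{z}=\beta$. Hence $P'$ is again simple, its vertex set is that of $P$ with $\mathbf{v}$ replaced by $\mathbf{v}_{1},\dots,\mathbf{v}_{n}$, and its facet normals are those of $P$ together with $\mathbf{w}$. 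Translating to normal cones: every vertex of $P$ other than $\mathbf{v}$ keeps its normal cone unchanged (its defining facets are untouched), and the normal cone of $\mathbf{v}_{i}$ is generated by the normals of the facets through $\mathbf{v}_{i}$, namely $\mathbf{x}_{1},\dots,\mathbf{x}_{i-1},\mathbf{x}_{i+1},\dots,\mathbf{x}_{n}$ and $\mathbf{w}$ — which is exactly $C_{i}$. So $\mathcal{N}'=\mathcal{N}\setminus\{C\}\cup\{C_{1},\dots,C_{n}\}$, and $P'$ has exactly one more facet than $P$. (One should also note $m=n$ in the statement, as there are $n$ new cones.)

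The one genuine point to verify carefully — and the step I expect to be the main obstacle — is that the cones $C_{i}$ genuinely tile $C$, i.e. that $\bigcup_{i}C_{i}=C$ with interiors disjoint, so that $\mathcal{N}'$ is actually a complete fan (equivalently, that no other facets of $P'$ appear and the truncation is "clean"). This is where the hypothesis $\mathbf{w}\in\operatorname{int}(C)$ is essential: writing $\mathbf{w}=\sum_{j}\alpha_{j}\mathbf{x}_{j}$ with all $\alpha_{j}>0$, a vector $\mathbf{z}=\sum_{j}c_{j}\mathbf{x}_{j}$ with all $c_{j}\ge 0$ lies in $C_{i}$ precisely when, after substituting $\mathbf{x}_{i}=\frac1{\alpha_{i}}(\mathbf{w}-\sum_{j\ne i}\alpha_{j}\mathbf{x}_{j})$, the resulting coefficients are nonnegative; a short computation shows this holds exactly for the index $i$ achieving $\min_{j} c_{j}/\alpha_{j}$, giving the barycentric-type subdivision of $C$ through the ray $\mathbf{w}$. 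I would present this as the core lemma-within-the-lemma, then note that completeness of $\mathcal{N}'$ plus the fact that it refines the inner normal fan of $P'$ forces $\mathcal{N}'$ to be exactly the normal fan of $P'$ (a complete fan refining a polytope's normal fan and having the same rays as that polytope's must coincide with it). Alternatively, and perhaps more cleanly, I would appeal directly to the standard description of vertex truncation in Chapter 7 of \cite{zieg}: cutting $P$ with a hyperplane that separates a single simple vertex $\mathbf{v}$ from the rest replaces that vertex by a simplex facet, and the resulting normal fan is obtained from $\mathcal{N}$ by "starring" $C$ at the interior ray normal to the new facet — which is precisely the subdivision into the $C_{i}$.
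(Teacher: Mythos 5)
Your proposal is correct and follows essentially the same route as the paper: truncate the vertex $\mathbf{v}$ dual to $C$ with the halfspace $\{\mathbf{w}^{\intercal}\mathbf{z}\le\beta\}$ for $\beta$ just below $\mathbf{w}^{\intercal}\mathbf{v}$, observe that the cut meets only the interiors of the $n$ edges at $\mathbf{v}$, and read off the new normal cones from the facets through each new vertex. Your extra verification that the $C_{i}$ genuinely tile $C$ (and your note that $m=n$ in the statement) is a welcome addition of rigor but not a different argument.
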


\begin{proof}
Since $\mathbf{w}$ is on the interior of $C$, $\mathbf{w}$ is uniquely maximized at a vertex $\mathbf{v}$ of $P$ corresponding to $C$. Let $P'= P \cap \{\mathbf{x}: \mathbf{w}^{\intercal}\mathbf{x} \leq \mathbf{w}^{\intercal} \mathbf{v} - \varepsilon\}$ for $\varepsilon > 0$ so small that $\mathbf{c}^{\intercal} \mathbf{v} > \mathbf{v}^{\intercal} \mathbf{u} + \varepsilon$ for all neighbors $\mathbf{u}$ of $\mathbf{v}$, which exists by the uniqueness of $\mathbf{v}$ as a $\mathbf{c}$-optimum. Then the hyper-plane defined by $\mathbf{w}$ is supporting and intersects each edge incident to $\mathbf{v}$ on its interior, and those are exactly the new vertices created by intersecting $P$ with that half-space. All other vertices and normal cones remain the same. The facet directions tight at each new vertex are given by $\mathbf{w}$ and the set of facets tight at the corresponding edge incident to $\mathbf{v}$. Hence, the set of cones for each of those vertices is precisely $C_{1}, C_{2}, \dots, C_{n}$. All other cones remain unchanged. 
\end{proof}

This lemma tells us that cutting off a single vertex with a new inequality slicing through the interiors of all of the edges incident to that vertex corresponds to a combinatorial barycentric subdivision of the normal cone. It also preserves nondegeneracy, since the resulting polytope is simple and being simple is equivalent to being non-degenerate. We may push this even further to find that given a simple polytope with a given normal fan, we can construct another polytope with the same normal fan except one cone is subdivided such that the generators of extreme rays of one of the cones in the subdivision are contained in an $\varepsilon$ ball of our choosing. 

\begin{lem}
\label{lem:improvedsubdiv}
Let $P$ be an $n$-dimensional simple polytope with normal fan $\mathcal{N}$ and $m$ facets, and let $C \in \mathcal{N}$. Let $D \subseteq C$ be an open ball of radius $\varepsilon > 0$. Then there exists an $n$-dimensional polytope $P'$ with normal fan $\mathcal{N}'$ with $m + n$ facets such that $\mathcal{N}' \supseteq \mathcal{N} \setminus \{C\}$ and there exists a cone $E \in \mathcal{N}'$ such that the extreme rays of $E$ all have generators in $D$.   
\end{lem}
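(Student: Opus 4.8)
The plan is to build $P'$ from $P$ by applying Lemma~\ref{lem:subdiv} exactly $n$ times in succession, each time subdividing the relevant cone at a point chosen inside $D$. Since each application of Lemma~\ref{lem:subdiv} adds exactly one facet and preserves simplicity (as recorded in the remark following that lemma, which is what lets the lemma be reapplied), after $n$ steps we arrive at a simple polytope with $m+n$ facets. The idea of the construction is to arrange, at the $k$-th step, for the cone we subdivide to have one of its $n$ extreme ray generators replaced by a new point in $D$, until after $n$ steps all $n$ extreme ray generators of some cone lie in $D$.

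In detail, write $C=\operatorname{cone}(\mathbf{x}_1,\dots,\mathbf{x}_n)$. Since $D$ is a nonempty open subset of the closed cone $C$, we have $D\subseteq\operatorname{int}C$, so I may take $\mathbf{w}_1$ to be any point of $D$; applying Lemma~\ref{lem:subdiv} at $C$ and $\mathbf{w}_1$ yields a simple polytope $P^{(1)}$ with $m+1$ facets whose normal fan contains $\mathcal N\setminus\{C\}$ together with, among others, the cone $C^{(1)}:=\operatorname{cone}(\mathbf{w}_1,\mathbf{x}_2,\dots,\mathbf{x}_n)$ — the one in which $\mathbf{x}_1$ is swapped for $\mathbf{w}_1$. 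Proceeding inductively, suppose we have reached a simple polytope $P^{(k-1)}$ with $m+k-1$ facets whose normal fan contains $\mathcal N\setminus\{C\}$ and the cone $C^{(k-1)}:=\operatorname{cone}(\mathbf{w}_1,\dots,\mathbf{w}_{k-1},\mathbf{x}_k,\dots,\mathbf{x}_n)$ with $\mathbf{w}_1,\dots,\mathbf{w}_{k-1}\in D$. Because $C^{(k-1)}$ is a simplicial cone having $\mathbf{w}_1$ among its generators, every point $\mathbf{w}_1+\delta(\mathbf{w}_2+\cdots+\mathbf{w}_{k-1}+\mathbf{x}_k+\cdots+\mathbf{x}_n)$ with $\delta>0$ lies in $\operatorname{int}C^{(k-1)}$, and these points tend to $\mathbf{w}_1\in D$ as $\delta\to0^{+}$; hence $\operatorname{int}C^{(k-1)}\cap D\neq\emptyset$ and I may choose $\mathbf{w}_k$ in this intersection. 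Applying Lemma~\ref{lem:subdiv} at $C^{(k-1)}$ and $\mathbf{w}_k$ produces $P^{(k)}$ with $m+k$ facets whose normal fan contains $\mathcal N\setminus\{C\}$ and the cone $C^{(k)}:=\operatorname{cone}(\mathbf{w}_1,\dots,\mathbf{w}_k,\mathbf{x}_{k+1},\dots,\mathbf{x}_n)$ obtained by swapping $\mathbf{x}_k$ for $\mathbf{w}_k$. After $n$ steps we take $P':=P^{(n)}$ and $E:=C^{(n)}=\operatorname{cone}(\mathbf{w}_1,\dots,\mathbf{w}_n)$, a cone whose extreme rays are generated by $\mathbf{w}_1,\dots,\mathbf{w}_n\in D$; and $P'$ has $m+n$ facets with $\mathcal N'\supseteq\mathcal N\setminus\{C\}$, as desired.

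Most of the steps above are bookkeeping: checking that the explicitly named cone $C^{(k)}$ is genuinely one of the $n$ cones produced by Lemma~\ref{lem:subdiv} at step $k$, and that simplicity persists so the lemma applies again. The one step requiring real care — and the only genuinely geometric claim — is that $\operatorname{int}C^{(k-1)}\cap D$ is nonempty at every stage, which is what permits a valid choice of $\mathbf{w}_k$; the crucial feature making this work is that $\mathbf{w}_1\in D$ survives as an extreme ray of every cone $C^{(k-1)}$ in the chain, so that the interior of the current cone always comes arbitrarily close to $\mathbf{w}_1$ and hence meets $D$. It is also worth recording, for the claim $\mathcal N'\supseteq\mathcal N\setminus\{C\}$, that the cones $C=C^{(0)}\supseteq C^{(1)}\supseteq\cdots\supseteq C^{(n)}=E$ are nested inside $C$, so at no step is any cone of $\mathcal N$ other than $C$ itself removed.
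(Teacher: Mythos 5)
Your proof is correct and follows essentially the same route as the paper: iterate Lemma~\ref{lem:subdiv} $n$ times, at each step choosing the new subdivision point in $D \cap \operatorname{int} C^{(k-1)}$, which is nonempty because the interior of the current cone accumulates at a point of the open set $D$. The only cosmetic difference is your choice of witness ($\mathbf{w}_1$ plus a small positive combination of the remaining generators, versus the paper's centroid of $\mathbf{w}_1,\dots,\mathbf{w}_k$ plus a small multiple of the $\mathbf{x}_j$); both work for the same reason.
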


\begin{proof}
We apply Lemma \ref{lem:subdiv} repeatedly. First choose any vector $\mathbf{w}_{1}$ in $D$. Let $C$ be the cone generated by $\mathbf{x}_{1}, \mathbf{x}_{2}, \dots, \mathbf{x}_{n}$. Then by Lemma \ref{lem:subdiv}, there is a polytope $P_{1}$ with the same normal fan as $P$ except the cone $C$ is replaced by $C_{1}, C_{2}, \dots, C_{n}$, where $C_{i}$ has extreme rays $\mathbf{x}_{1}, \mathbf{x}_{2}, \dots, \mathbf{x}_{i-1}, \mathbf{w}_{1},$ $ \mathbf{x}_{i+1}, \dots, \mathbf{x}_{n}$. In particular, the resulting polytope has a normal cone of the form $\mathbf{w}_{1}, \mathbf{x}_{2}, \dots, \mathbf{x}_{n}$. We then build the desired cone using Lemma \ref{lem:subdiv} and induction. Namely, let $k < n$, and suppose that there is a cone with extreme rays $\mathbf{w}_{1}, \mathbf{w}_{2}, \dots, \mathbf{w}_{k}, \mathbf{x}_{k+1}, \dots, \mathbf{x}_{n}$ in a polytope $P_{k}$ with $k$ more facets than $P$ with additional facet normals of the form $\mathbf{w}_{1}, \dots, \mathbf{w}_{k} \in D$. Suppose further that the normal fan of $P_{k}$ contains $\mathcal{N} \setminus \{C\}$. Then define the vector:
\[\mathbf{w}_{k+1} = \sum_{i=1}^{k} \frac{\mathbf{w}_{i}}{k} + \delta \sum_{j=k+1}^{n} \mathbf{x}_{j}\] 
for $\delta > 0$ so small that $\mathbf{w}_{k+1} \in D$. Such a $\delta$ exists, since $\mathbf{w}_{1}, \dots, \mathbf{w}_{k} \in D$ and $D$ is open and convex. Then $\mathbf{w}_{k+1}$ is on the interior of the cone generated by $\mathbf{w}_{1}, \dots, \mathbf{w}_{k}, \mathbf{x}_{k+1}, \dots, \mathbf{x}_{n}$, since it has positive coefficients for each of the extreme rays. By Lemma \ref{lem:subdiv}, we may then construct a new polytope $P_{k+1}$ with $k+1$ more facets than $P$ such that it contains a cone with $k+1$ extreme rays with generators in $D$ and such that its normal fan contains $\mathcal{N} \setminus \{C\}$. By induction, after doing this $n$ times, we arrive at a polytope $P'$ satisfying the desired conditions.

\end{proof}

To prove both Theorems \ref{thm:shadowsimplex} and \ref{thm:coherentpath}, we will prove a more general statement saying that whenever a shadow path of a given length exists, we may construct a new simple polytope for which every shadow path between a fixed pair of vertices must be of at least that length while only adding a linear number of facets.

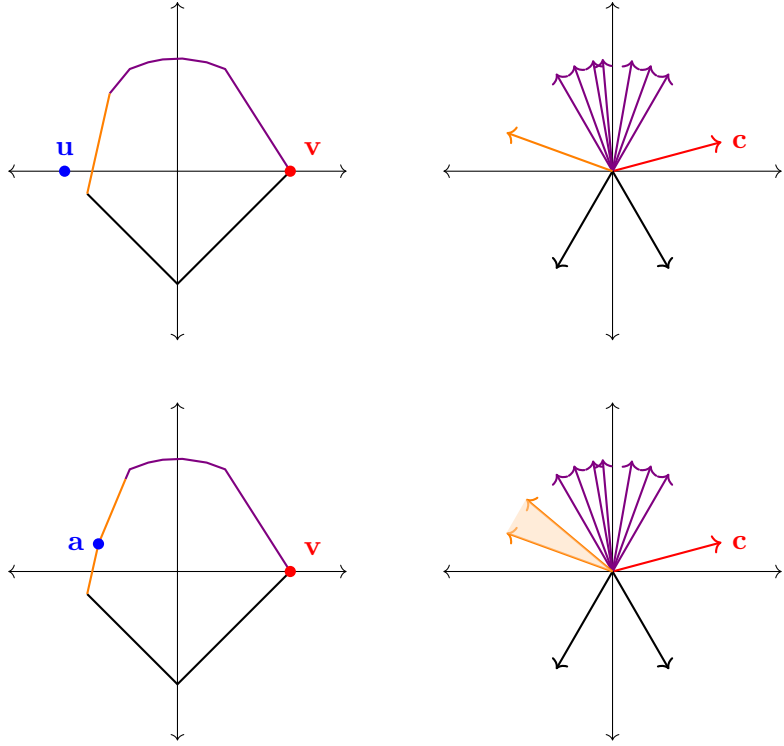
\begin{figure}
    \centering
    \[
\begin{tikzpicture}[scale = 1.5]
    \draw[<->] (-1.5,0) -- (1.5,0);
    \draw[<->] (0,-1.5) -- (0,1.5);
    \draw[thick, violet] (0:1) -- (65:1) -- (75:1) -- (87.5:1) -- (97.5:1) -- (105:1) --(115:1)-- (-.6, .69); 
    \draw[thick] (-.8,-.2) -- (270:1) -- (0:1);
    \draw[thick, orange] (-.8,-.2) -- (-.6, .69);
    \draw (0:1) node[red, circle,fill, inner sep = 1.5pt] {};
    \draw (1.2,.2) node[red] {$\mathbf{v}$};
    \draw (-1,0) node[blue, circle,fill, inner sep = 1.5pt] {};
    \draw (-1,.2) node[blue] {$\mathbf{u}$};
    
\end{tikzpicture}
\hskip.5in
\begin{tikzpicture}[scale = 1.5]
    \draw[<->] (-1.5,0) -- (1.5,0);
    \draw[<->] (0,-1.5) -- (0,1.5);
    
    
    \draw[red, thick, ->] (0,0) -- (15:1) node[right] {$\mathbf{c}$};
    \draw[thick, violet, ->] (0,0) -- (60:1);
    \draw[thick, violet, ->] (0,0) -- (70:1);
    \draw[thick, violet, ->] (0,0) -- (80:1);
    \draw[thick, violet, ->] (0,0) -- (95:1);
    \draw[thick, violet, ->] (0,0) -- (100:1);
    \draw[thick, violet, ->] (0,0) -- (110:1);
    \draw[thick, violet, ->] (0,0) -- (120:1);
    \draw[thick, ->] (0,0) -- (240:1);
    \draw[thick, ->] (0,0) -- (300:1);
    \draw[orange, thick, ->] (0,0) -- (160:1);
\end{tikzpicture}\]

\[
\begin{tikzpicture}[scale = 1.5]
    \draw[<->] (-1.5,0) -- (1.5,0);
    \draw[<->] (0,-1.5) -- (0,1.5);
    \draw[thick, violet] (0:1) -- (65:1) -- (75:1) -- (87.5:1) -- (97.5:1) -- (105:1) --(115:1)-- (-.46,.82); 
    \draw[thick] (-.8,-.2) -- (270:1) -- (0:1);
    \draw[thick, orange] (-.8,-.2) -- (-.7, .245) -- (-.46,.82);
    \draw (0:1) node[red, circle,fill, inner sep = 1.5pt] {};
    \draw (1.2,.2) node[red] {$\mathbf{v}$};
    \draw (-.7, .245) node[blue, circle, fill, inner sep = 1.5pt] {};
    \draw (-.9, .245) node[blue] {$\mathbf{a}$};
\end{tikzpicture}
\hskip.5in
\begin{tikzpicture}[scale = 1.5]
    \draw[<->] (-1.5,0) -- (1.5,0);
    \draw[<->] (0,-1.5) -- (0,1.5);
    
    
    \draw[red, thick, ->] (0,0) -- (15:1) node[right] {$\mathbf{c}$};
    \draw[thick, violet, ->] (0,0) -- (60:1);
    \draw[thick, violet, ->] (0,0) -- (70:1);
    \draw[thick, violet, ->] (0,0) -- (80:1);
    \draw[thick, violet, ->] (0,0) -- (95:1);
    \draw[thick, violet, ->] (0,0) -- (100:1);
    \draw[thick, violet, ->] (0,0) -- (110:1);
    \draw[thick, violet, ->] (0,0) -- (120:1);
    \draw[thick, ->] (0,0) -- (240:1);
    \draw[thick, ->] (0,0) -- (300:1);
    \draw[orange, thick, ->] (0,0) -- (140:1);
    \draw[orange, thick, ->] (0,0) -- (160:1);
    \fill[orange!30, opacity=0.5] (0,0) -- (140:1) -- (160:1) -- cycle;
\end{tikzpicture}\]
    \caption{Pictured at the top is an example of subdividing the normal fan of the polygon from Figure \ref{fig:NormalFan} by adding a single new inequality as in Lemma \ref{lem:subdiv}. Note that, since this inequality does not cut off any edges completely and only removes a vertex, the resulting normal fan is a refinement of the original normal fan. Pictured at the bottom is the result of adding a second inequality to create a new vertex $\mathbf{a}$ whose normal cone is properly contained in the original normal cone of the cutoff vertex $\mathbf{u}$ as constructed by Lemma \ref{lem:improvedsubdiv}. Note that any line segment from a point in the normal cone of the blue vertex to $\mathbf{c}$ must follow the longer path on the top of the polygon. This is precisely the idea of the proof of Theorem \ref{thm:manyfromone}.  }
    \label{fig:Subdividing}
\end{figure}

\begin{theorem} 
\label{thm:manyfromone}
Let $P$ be a simple polytope with $m$ facets in $n$ dimensions, and suppose that $P$ has a shadow path of length $\alpha$ from a vertex $\mathbf{u}$ to a vertex $\mathbf{v}$. Then there exists a simple polytope $Q$ with $m+2n$ facets in $n$ dimensions with vertices $\mathbf{a}$ and $\mathbf{b}$ such that any shadow path from $\mathbf{a}$ to $\mathbf{b}$ is of length at least $\alpha$.
\end{theorem}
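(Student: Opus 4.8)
The plan is to read $\mathbf{w},\mathbf{c}$ off the parametric picture of the shadow path, to subdivide the two normal cones $C_{\mathbf u}$ and $C_{\mathbf v}$ into very thin cones by two applications of Lemma~\ref{lem:improvedsubdiv}, and then to argue that thinness forces every shadow path between the two new vertices to track the old segment $[\mathbf w,\mathbf c]$, hence to meet at least $\alpha$ cones. In detail: by the normal-fan description of shadow paths, there are $\mathbf{w}\in\operatorname{int}(C_{\mathbf u})$ and $\mathbf{c}\in\operatorname{int}(C_{\mathbf v})$ such that the given path consists of exactly those vertices whose normal cones meet the segment $[\mathbf{w},\mathbf{c}]$, and this segment meets at least $\alpha$ maximal cones of $\mathcal{N}$. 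We may assume $\alpha\geq 2$ (the statement is trivial otherwise). After a generic perturbation of $\mathbf{w}$ inside $\operatorname{int}(C_{\mathbf u})$ and of $\mathbf{c}$ inside $\operatorname{int}(C_{\mathbf v})$ — which keeps $\mathbf{u}$ and $\mathbf{v}$ as the respective maximizers and, as discussed below, does not shorten the path — we may further assume that $[\mathbf{w},\mathbf{c}]$ meets $\mathcal{N}$ transversally, so it passes through the interior of each maximal cone it meets; writing these in order as $C_{\mathbf u}=D_{0},D_{1},\dots,D_{\alpha-1}=C_{\mathbf v}$, for each middle index $1\leq i\leq\alpha-2$ there is a point $p_{i}=(1-\lambda_{i})\mathbf{w}+\lambda_{i}\mathbf{c}\in\operatorname{int}(D_{i})$ with $\lambda_{i}\in(0,1)$.

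\textbf{The construction.} Fix $\varepsilon>0$ (to be chosen) and let $B_{\mathbf w}\subseteq C_{\mathbf u}$ and $B_{\mathbf c}\subseteq C_{\mathbf v}$ be the open $\varepsilon$-balls about $\mathbf{w}$ and $\mathbf{c}$, which lie in their cones once $\varepsilon$ is small since $\mathbf{w},\mathbf{c}$ are interior points. Applying Lemma~\ref{lem:improvedsubdiv} to $C_{\mathbf u}$ with the ball $B_{\mathbf w}$ yields a simple polytope $P_{1}$ (simple, since Lemma~\ref{lem:improvedsubdiv} iterates Lemma~\ref{lem:subdiv}) with $m+n$ facets whose normal fan contains $\mathcal{N}\setminus\{C_{\mathbf u}\}$ — in particular $C_{\mathbf v}$ and the middle cones $D_{1},\dots,D_{\alpha-2}$ survive — together with a cone $E$ all of whose extreme-ray generators lie in $B_{\mathbf w}$. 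Since $P_{1}$ is simple and $C_{\mathbf v}$ is still a cone of its fan, a second application of Lemma~\ref{lem:improvedsubdiv}, now to $C_{\mathbf v}$ with the ball $B_{\mathbf c}$, gives a simple polytope $Q$ with $m+2n$ facets whose normal fan contains $E$, all of $D_{1},\dots,D_{\alpha-2}$, and a cone $F$ all of whose generators lie in $B_{\mathbf c}$. Let $\mathbf{a}$ and $\mathbf{b}$ be the vertices of $Q$ with $C_{\mathbf a}=E$ and $C_{\mathbf b}=F$; they are distinct from one another and from the vertices $\mathbf{z}_{i}$ of $D_{i}$, because $E\subseteq C_{\mathbf u}$ and $F\subseteq C_{\mathbf v}$ while $D_{i}\not\subseteq C_{\mathbf u}$ and $C_{\mathbf u}\neq C_{\mathbf v}$. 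Since the generators of $E$ lie in the $\varepsilon$-ball about $\mathbf{w}$, the cone $E$ sits inside an angular sector of width $O(\varepsilon)$ about the ray through $\mathbf{w}$, and likewise $F$ about $\mathbf{c}$.

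\textbf{The length bound.} Any shadow path of $Q$ from $\mathbf{a}$ to $\mathbf{b}$ is induced by some $\mathbf{w}'\in\operatorname{int}(E)$ and $\mathbf{c}'\in\operatorname{int}(F)$, and its length equals the number of maximal cones of the normal fan of $Q$ that meet $[\mathbf{w}',\mathbf{c}']$. Since every such cone is invariant under positive scaling, it meets $[\mathbf{w}',\mathbf{c}']$ if and only if it meets the two-dimensional cone $\operatorname{cone}\{\mathbf{w}',\mathbf{c}'\}$ in a point other than the origin; hence the length depends only on this two-dimensional cone, and we may rescale $\mathbf{w}'$ and $\mathbf{c}'$ to have the norms of $\mathbf{w}$ and $\mathbf{c}$, so that $\|\mathbf{w}'-\mathbf{w}\|=O(\varepsilon)$ and $\|\mathbf{c}'-\mathbf{c}\|=O(\varepsilon)$. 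Then for each middle index $i$ the point $p_{i}'=(1-\lambda_{i})\mathbf{w}'+\lambda_{i}\mathbf{c}'$ lies within $O(\varepsilon)$ of $p_{i}\in\operatorname{int}(D_{i})$, so $p_{i}'\in\operatorname{int}(D_{i})$ for $\varepsilon$ small enough, and one $\varepsilon$ works for all $i$ since there are finitely many. As $p_{i}'\in\operatorname{cone}\{\mathbf{w}',\mathbf{c}'\}$ is nonzero, each of $D_{1},\dots,D_{\alpha-2}$ is met, and so are $E\ni\mathbf{w}'$ and $F\ni\mathbf{c}'$; these are $\alpha$ distinct maximal cones, so the path has length at least $\alpha$.

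\textbf{Main obstacle.} I expect the delicate point to be the reduction to a transverse segment in the first paragraph: one must verify that perturbing $(\mathbf{w},\mathbf{c})$ within $\operatorname{int}(C_{\mathbf u})\times\operatorname{int}(C_{\mathbf v})$ cannot make the shadow path shorter. A non-transverse segment can meet a maximal cone $D_{i}$ only along a lower-dimensional face, and a perturbation may then drop $D_{i}$; one must argue that, in the normal fan of a simple polytope, the cones entered just before and just after such a face are separated precisely by the cones sharing that face, so the perturbed segment still passes through one of them and the count is preserved (or increased). Making this precise, and making ``$\varepsilon$ small enough'' uniform over all admissible $\mathbf{w}'\in\operatorname{int}(E)$ and $\mathbf{c}'\in\operatorname{int}(F)$ — which uses compactness of the corresponding sets of directions and the fact that transverse intersection is an open condition — is where the real work lies; the remaining steps (the two invocations of Lemma~\ref{lem:improvedsubdiv}, the facet bookkeeping, and the scaling observation) are routine.
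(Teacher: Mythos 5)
This is correct and essentially the paper's own argument: pick interior points of each crossed cone on the segment $[\mathbf{w},\mathbf{c}]$, take $\varepsilon$-balls $D_{\mathbf{w}}, D_{\mathbf{c}}$ small enough that convex combinations of points in them stay in the interiors of the corresponding cones, apply Lemma \ref{lem:improvedsubdiv} twice, and rescale the parameters $\mathbf{w}', \mathbf{c}'$ of any shadow path from $\mathbf{a}$ to $\mathbf{b}$ back into those balls. The transversality issue you flag as the main obstacle is simply taken for granted in the paper (a shadow path of length $\alpha$ is understood to mean the segment crosses the interiors of $\alpha+1$ consecutive cones), your uniformity worry is already answered by your own observation that every rescaled $\mathbf{w}'$ lies within $O(\varepsilon)$ of $\mathbf{w}$ independently of the choice, and the only slip is an off-by-one in your cone count: with length measured in edges you should list $\alpha+1$ cones $D_0,\dots,D_{\alpha}$ rather than $\alpha$ of them.
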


\begin{proof}
 Let $\mathbf{w}$ and $\mathbf{c}$ be linear objective functions optimized uniquely at $\mathbf{u}$ and $\mathbf{v}$ respectively such that the segment from $\mathbf{w}$ and $\mathbf{c}$ intersected with the normal fan of $P$ determines the path. Consider the cones intersected by the segment $\{\lambda \mathbf{w} + (1-\lambda)\mathbf{c}: \lambda \in [0,1]\}$ in order from $\mathbf{u}$ to $\mathbf{w}$ given by $C_{0}, C_{1}, \dots, C_{\alpha}$, where $\alpha$ is the length of the path. Each $C_{i}$ is intersected on its interior by the line segment, so for each $i$, we may choose a vector $\mathbf{c}_{i}$ on the segment that lies on the interior of the cone $C_{i}$ with $\mathbf{w} = \mathbf{c}_{0}$ and $\mathbf{c} = \mathbf{c}_{\alpha}$. Then, for each $\mathbf{c}_{i}$ there exists some $\varepsilon_{i} > 0$ such that the open ball of radius $\varepsilon_{i}$ around $\mathbf{c}_{i}$ is also contained in the cone. Let $\varepsilon = \min(\varepsilon_{i})$, and consider the balls $D_{\mathbf{w}}$ and $D_{\mathbf{c}}$ of radius $\varepsilon$ around $\mathbf{w}$ and $\mathbf{c}$. Then any line segment from a point in $D_{\mathbf{w}}$ to a point in $D_{\mathbf{c}}$ must intersect each ball of radius $\varepsilon$ around each $\mathbf{c}_{i}$ and therefore must intersect each cone $C_{i}$ on its interior. 

 By applying Lemma \ref{lem:improvedsubdiv} twice, we may construct a polytope $Q$ with $m + 2n$ facets such that the normal fan of $Q$ is the same as the normal fan of $P$ except the normal cones at $\mathbf{u}$ and $\mathbf{v}$ are subdivided. In particular, we may construct $Q$ such that there are vertices $\mathbf{a}, \mathbf{b}$ of $Q$ with normal cones $C_{\mathbf{a}}$ and $C_{\mathbf{b}}$ such that the extreme rays of the cones are contained in $D_{\mathbf{w}}$ and $D_{\mathbf{c}}$ respectively. Consider a shadow path from $\mathbf{a}$ to $\mathbf{b}$. Then there must be a vector $\mathbf{w}_{\mathbf{a}} \in C_{\mathbf{a}}$ and $\mathbf{c}_{\mathbf{b}} \in C_{\mathbf{b}}$ such that the vertices of the path are given by precisely the set of vertices corresponding to cones intersecting the line segment from $\mathbf{w}_{\mathbf{a}}$ to $\mathbf{c}_{\mathbf{b}}$. Since the rays of $C_{\mathbf{a}}$ are contained in $D_{\mathbf{w}}$, we may rescale $\mathbf{w}_{\mathbf{a}}$ to find a vector in $D_{\mathbf{w}}$. Rescaling does not change the shadow path, since the shadow path is dictated by the ordering of the slopes of the edges of the polytope under the shadow map $\pi$, and rescaling changes the slopes of all of the edges by the same positive constant factor. Hence, without loss of generality, we may assume that $\mathbf{w}_{\mathbf{a}} \in C_{\mathbf{a}} \cap D_{\mathbf{w}}$ and, by similar reasoning, that $\mathbf{c}_{\mathbf{b}} \in C_{\mathbf{b}} \cap D_{\mathbf{c}}$. It follows that the line segment from $\mathbf{w}_{\mathbf{a}}$ to $\mathbf{c}_{\mathbf{b}}$ intersects each of the $C_{i}$ on their interior. Since each $C_{i}$ other than $C_{0}$ and $C_{\alpha}$ are contained in the normal fan of $Q$, the number of normal cones intersected to arrive at the shadow path in $Q$ is at least $\alpha$. Therefore, since our choice of $\mathbf{w}_{\mathbf{a}}$ and $\mathbf{c}_{\mathbf{b}}$ was arbitrary, the length of any shadow path from $\mathbf{a}$ to $\mathbf{b}$ is at least $\alpha$.
\end{proof}

The proof of Theorem \ref{thm:coherentpath} then follows immediately from applying Theorem \ref{thm:manyfromone} to a combinatorial cube with an shadow containing all of its vertices as in \cite{GoldfarbCube}. Note that, if $\mathbf{c}$ is fixed, only one cone needs to be subdivided. If the linear program is fixed, we may assume that $\mathbf{c}$ is fixed, so the result could be sharpened to allow for $m+n$ facets in $n$ dimensions, which implies Theorem \ref{thm:shadowsimplex}. One challenge of this result is that it is not constructive as is currently presented. In particular, the coefficients of inequalities defining the resulting polytope may blow up in size. A lower bound that ensures the bit-size of the description of the polytope is not too large may still be possible with our techniques.

From the theory of monotone path polytopes \cite{FiberPoly}, given a fixed choice of $\mathbf{c} \in \mathbb{R}^{n}$, the set of $\mathbf{w}$ that yield the same coherent monotone path is a polyhedral cone. Namely, a coherent monotone path is chosen from an initial vertex precisely when each step is the path has maximal slope amongst all improving neighbors. Since $\mathbf{c}$ is fixed, all inequalities between the slopes of edges of the polytope for different choices of $\mathbf{w}$ are linear inequalities. Hence, the set of $\mathbf{w}$ choosing the same coherent monotone path is defined by linear inequalities in terms of the edge directions of the polytope and the linear objective function $\mathbf{c}$. Now, instead of using the nonconstructive balls of radius $\varepsilon$, we may work with an explicit polyhedral cone of all choices of $\mathbf{w}$ that yield the same coherent monotone path. Then the same construction technique applies, but instead of building a cone with rays in the ball of radius $\varepsilon$, we build a new normal cone inside the cone of all $\mathbf{w}$ defining the coherent monotone path, which has an explicit inequality description in terms of the polytope. The critical challenge here is that the cone for the coherent monotone path may be defined by exponentially many inequalities, so there is no guarantee of the existence of a cone contained within it defined by polynomial data.

\begin{op}
Does there exist a shadow pivot rule that guarantees the simplex method will run in weakly polynomial run-time? 
\end{op}

Furthermore, our construction method requires a minimum of $3n$ facets in $n$ dimensions to yield an exponential counterexample for every shadow pivot rule. This is distinct from many other counterexamples for other pivot rules, which are combinatorial cubes. The following open question remains interesting to push this result further:

\begin{op}
For each $n \in \mathbb{N}$, does there exist an $n$-dimensional combinatorial cube $C_{n}$ for which every shadow path between a fixed pair of vertices of $C_{n}$ is of length exponential in $n$? 
\end{op}

Even outside of the context of the simplex method, it is surprising that one can construct a polytope for which any shadow containing a certain fixed pair of vertices has exponentially many vertices. The following open problem is a follow up to that observation:

\begin{op}
Does there exist a family of polytopes for which each shadow containing a certain fixed vertex has exponentially many vertices?
\end{op}

 This problem is equivalent to asking for an example where every shadow path from a $\mathbf{c}$-minimum to a $\mathbf{c}$-maximum is of exponential length. A final related open problem is a strengthening of this:

\begin{op}
\label{op:everyshadowbad}
Does there exist a family of polytopes for which every shadow has exponentially many vertices?
\end{op}

By applying a suitable invertible linear transformation, one can ensure that any particular shadow of the polytope appears with arbitrarily high probability for a random shadow. Hence, Open Problem \ref{op:everyshadowbad} is equivalent to asking whether there exists a family polytopes closed under invertible linear transformations for which the expected size of a random shadow is always exponential.

\section{Steepest Edge Lower Bounds for Every Norm}

In this section, our goal is to prove Theorems \ref{thm:anynorm} and \ref{thm:allnormsatonce}. The key tool for our construction method is the family of exponential examples for the shadow-simplex method by Goldfarb in \cite{GoldfarbCube}. Namely, Goldfarb constructed a family of combinatorial cubes for which the shadow simplex method follows a path through all the vertices on those cubes. For any fixed norm, we construct an affine transformation of a Golfarb cube that guarantees the simplex method with the steepest edge variant corresponding to the chosen norm will follow the same path.  See Figure \ref{fig:Compressing} for an illustration of our approach.

\begin{proof}[Proofs of Theorems \ref{thm:anynorm} and \ref{thm:allnormsatonce}]
Let $C_{n}$ be an $n$-dimensional combinatorial cube for which a shadow simplex path walks through all the vertices for optimizing a linear program $\max(\mathbf{c}^{\intercal} \mathbf{x})$ such that $\mathbf{x} \in C_{n}$. Let $\mathbf{w}$ be any auxiliary vector that chooses the shadow simplex path walking through all vertices.

\begin{figure}
\[    \begin{tikzpicture}[xscale = .3, yscale = .3]
    \draw (6,12) node {$C_{3} = A_{1}(C_{3})$};
    \draw[thick, red] (0,-.5) -- (4,0) -- (8,1) -- (12,4) -- (12,6) -- (8,9) -- (4,10) -- (0,10.5);
    \draw[thick] (0,-.5) -- (0,10.5);
    \draw[thick, dashed] (0,-.5) -- (12,4);
    \draw[thick, dashed] (0,10.5) -- (12,6);
    \draw[thick] (4,0) -- (4,10);
    \draw[thick] (8,1) -- (8,9);
    \draw (0,10.5) node[orange, circle,fill, inner sep = 1.5pt] {};
\end{tikzpicture} \hspace{.5 cm}
\begin{tikzpicture}[xscale = .2, yscale = .3]
    \draw (6,12) node {$A_{2}(C_{3})$};
    \draw[thick, red] (0,-.5) -- (4,0) -- (8,1) -- (12,4) -- (12,6) -- (8,9) -- (4,10) -- (0,10.5);
    \draw[thick] (0,-.5) -- (0,10.5);
    \draw[thick, dashed] (0,-.5) -- (12,4);
    \draw[thick, dashed] (0,10.5) -- (12,6);
    \draw[thick] (4,0) -- (4,10);
    \draw[thick] (8,1) -- (8,9);
    \draw (0,10.5) node[orange, circle,fill, inner sep = 1.5pt] {};
\end{tikzpicture}\hspace{.5 cm}
\begin{tikzpicture}[xscale = .1, yscale = .3]
    \draw (6,12) node {$A_{3}(C_{3})$};
    \draw[thick, red] (0,-.5) -- (4,0) -- (8,1) -- (12,4) -- (12,6) -- (8,9) -- (4,10) -- (0,10.5);
    \draw[thick] (0,-.5) -- (0,10.5);
    \draw[thick, dashed] (0,-.5) -- (12,4);
    \draw[thick, dashed] (0,10.5) -- (12,6);
    \draw[thick] (4,0) -- (4,10);
    \draw[thick] (8,1) -- (8,9);
    \draw (0,10.5) node[orange, circle,fill, inner sep = 1.5pt] {};
\end{tikzpicture}\hspace{.5 cm}
\begin{tikzpicture}[xscale = .05, yscale = .3]
    \draw (6,12) node {$A_{4}(C_{3})$};
    \draw[thick, red] (0,-.5) -- (4,0) -- (8,1) -- (12,4) -- (12,6) -- (8,9) -- (4,10) -- (0,10.5);
    \draw[thick] (0,-.5) -- (0,10.5);
    \draw[thick, dashed] (0,-.5) -- (12,4);
    \draw[thick, dashed] (0,10.5) -- (12,6);
    \draw[thick] (4,0) -- (4,10);
    \draw[thick] (8,1) -- (8,9);
    \draw (0,10.5) node[orange, circle,fill, inner sep = 1.5pt] {};
\end{tikzpicture}\hspace{.5cm}
\begin{tikzpicture}[xscale = .025, yscale = .3]
    \draw (6,12) node {$A_{5}(C_{3})$};
    \draw[thick, red] (0,-.5) -- (4,0) -- (8,1) -- (12,4) -- (12,6) -- (8,9) -- (4,10) -- (0,10.5);
    \draw[thick] (0,-.5) -- (0,10.5);
    \draw[thick, dashed] (0,-.5) -- (12,4);
    \draw[thick, dashed] (0,10.5) -- (12,6);
    \draw[thick] (4,0) -- (4,10);
    \draw[thick] (8,1) -- (8,9);
    \draw (0,10.5) node[orange, circle,fill, inner sep = 1.5pt] {};
\end{tikzpicture}\hspace{.5cm}
\begin{tikzpicture}[xscale = .01, yscale = .3]
    \draw (6,12) node {$A_{6}(C_{3})$};
    \draw[thick, red] (0,-.5) -- (4,0) -- (8,1) -- (12,4) -- (12,6) -- (8,9) -- (4,10) -- (0,10.5);
    \draw[thick] (0,-.5) -- (0,10.5);
    \draw[thick, dashed] (0,-.5) -- (12,4);
    \draw[thick, dashed] (0,10.5) -- (12,6);
    \draw[thick] (4,0) -- (4,10);
    \draw[thick] (8,1) -- (8,9);
    \draw (0,10.5) node[orange, circle,fill, inner sep = 1.5pt] {};
\end{tikzpicture}
\]
    \caption{Pictured is a combinatorial cube $C_{3}$ with a shadow containing all of its vertices as is used in the proof of Theorems \ref{thm:anynorm} and \ref{thm:allnormsatonce}. In the proof, we rely on a family of linear transformations $A_{k}$ that make the edges of the polytope approximately parallel to ensure a steepest edge pivot rule will follow the shadow path. The picture illustrates $A_{k}(C_{3})$ as k increases. }
    \label{fig:Compressing}
\end{figure}
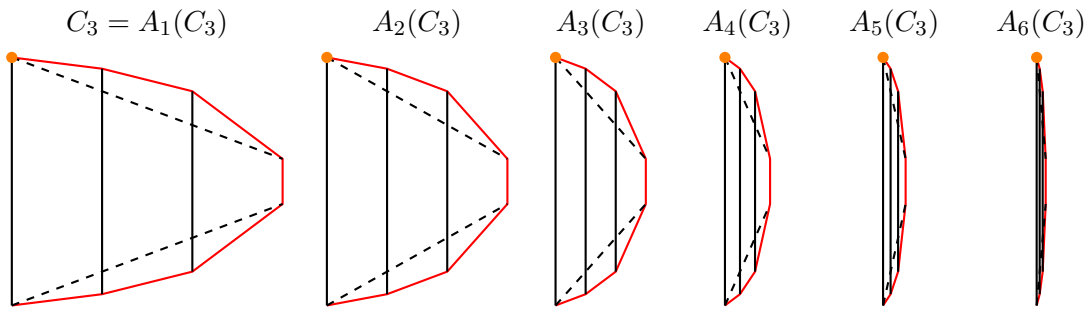
We may assume without loss of generality that $\|\mathbf{w}\|_{2} = 1$, since rescaling does not change the choice of shadow. Construct an orthonormal basis for $\mathbb{R}^{n}$ with respect to the typical $2$-norm via the Gram Schmidt process given by $\mathbf{w}, \mathbf{a}^{1}, \dots, \mathbf{a}^{n-1}$. Then define a family of linear maps for all $k > 0$ 
\[A_{k}\left(\lambda \mathbf{w} + \sum_{i=1}^{n-1} \lambda_{i} \mathbf{a}^{i}\right) = \lambda \mathbf{w} + \sum_{i=1}^{n-1} \frac{ \lambda_{i}}{k} \mathbf{a}^{i}. \]

By construction, $A_{k}$ is invertible for all $k > 0$ and $A_{k}(\mathbf{w}) = \mathbf{w}$. Our goal is to show that for all $k$ sufficiently large and vertices $\mathbf{v}$ of $C_{n}$, we have
\begin{equation}
\label{eqn:initialformulation}
    \argmax_{\mathbf{u} \in N_{\mathbf{c}}(\mathbf{v})}  \left( \frac{\mathbf{c}^{\intercal} A_{k}^{-1}(A_{k}(\mathbf{u}) - A_{k}(\mathbf{v}))}{\mathbf{w}^{\intercal}A_{k}^{-1}(A_{k}(\mathbf{u}) - A_{k}(\mathbf{v}))}\right) = \argmax_{\mathbf{u} \in N_{\mathbf{c}}(\mathbf{v})}  \left( \frac{\mathbf{c}^{\intercal}A_{k}^{-1}(A_{k}\mathbf{u} - A_{k}\mathbf{v})}{\eta(A_{k}\mathbf{u} - A_{k}\mathbf{v})}\right),    
\end{equation}

for that readily implies that the $\eta$-steepest edge path on $A_{k}(P)$ must be exponentially long. Observe that Equation \ref{eqn:initialformulation} may be rewritten as
\[\argmax_{\mathbf{u} \in N_{\mathbf{c}}(\mathbf{v})}  \left( \frac{\mathbf{c}^{\intercal}(\mathbf{u} - \mathbf{v})}{\mathbf{w}^{\intercal}(\mathbf{u} - \mathbf{v})}\right) = \argmax_{\mathbf{u} \in N_{\mathbf{c}}(\mathbf{v})}  \left( \frac{\mathbf{c}^{\intercal}(\mathbf{u} - \mathbf{v})}{\eta(A_{k}(\mathbf{u} -\mathbf{v}))}\right).\]
In fact, rescaling $\mathbf{w}$ does not change the chosen vertex. Thus, the following is also equivalent to Equation \ref{eqn:initialformulation}: 
\[\argmax_{\mathbf{u} \in N_{\mathbf{c}}(\mathbf{v})}  \left( \frac{\mathbf{c}^{\intercal}(\mathbf{u} - \mathbf{v})}{\eta(\mathbf{w})\mathbf{w}^{\intercal}(\mathbf{u} - \mathbf{v})}\right) = \argmax_{\mathbf{u} \in N_{\mathbf{c}}(\mathbf{v})}  \left( \frac{\mathbf{c}^{\intercal}(\mathbf{u} - \mathbf{v})}{\eta(A_{k}(\mathbf{u} -\mathbf{v}))}\right).\]

Note that $\mathbf{c}$, $\mathbf{w}$, and $C_{n}$ may be perturbed slightly to be sufficiently generic so that $\frac{\mathbf{c}^{\intercal}(\mathbf{u}-\mathbf{v)}}{\mathbf{w}^{\intercal}(\mathbf{u}-\mathbf{v})}$ is distinct for all pairs of vertices $\mathbf{u}$ and $\mathbf{v}$ of $P$ such that $\mathbf{u}$ is a $\mathbf{c}$-improving neighbor of $\mathbf{v}$. In particular, there exists $\varepsilon > 0$ sufficiently small such that if 
\[|\eta(A_{k}(\mathbf{u}-\mathbf{v})) - \eta(\mathbf{w})\mathbf{w}^{\intercal}(\mathbf{u}-\mathbf{v})| < \varepsilon, \]
for all $\mathbf{u}, \mathbf{v} \in P$ with $\mathbf{u}$ a $\mathbf{c}$-improving neighbor of $\mathbf{v}$, then Equation \ref{eqn:initialformulation} must hold.

Let $S = \{\mathbf{u} - \mathbf{v}: \mathbf{u} \in N_{\mathbf{c}}(\mathbf{v}), \mathbf{v} \in V(C_{n})\}$, where $V(C_{n})$ is the set of vertices of $C_{n}$. To prove that Equation \ref{eqn:initialformulation} holds, it suffices to show there exists $k$ sufficiently large that 
\[\max_{\mathbf{s} \in S} (|\eta(A_{k}(\mathbf{s})) - \eta(\mathbf{w})\mathbf{w}^{\intercal}\mathbf{s}|) < \varepsilon.\]
In fact, it suffices to show for any $\mathbf{s} \in S$ one can find $k_{\mathbf{s}}$ sufficiently large so that 
\[|\eta(A_{k}(\mathbf{s})) - \eta(\mathbf{w})\mathbf{w}^{\intercal} \mathbf{s}| < \varepsilon\]
for all $k > k_{\mathbf{s}}$, since one can take the maximum of all choices of $k_{\mathbf{s}}$ to achieve the uniform bound. This will be our strategy.

Let $\mathbf{t} \in S$. We now setup the technical bounds for our computation. The first steps are purely algebraic taking advantage of $\eta$ being positive homogeneous:

\begin{align*}
    |\eta(A_{k}(\mathbf{t})) - \eta(\mathbf{w})\mathbf{w}^{\intercal} \mathbf{t}| &= |\eta(A_{k}((\mathbf{w}^{\intercal} \mathbf{t}) \mathbf{w} + \mathbf{t} - (\mathbf{w}^{\intercal} \mathbf{t}) \mathbf{w})) - \eta(\mathbf{w}) \mathbf{w}^{\intercal} \mathbf{t}|, \\
    &= \left||\mathbf{w}^{\intercal} \mathbf{t}| \eta\left(\mathbf{w} +A_{k}\left( \frac{\mathbf{t} - \mathbf{w}^{\intercal} \mathbf{t} \mathbf{w}} {\mathbf{w}^{\intercal} \mathbf{t}}\right)\right)  - \eta(\mathbf{w}) \mathbf{w}^{\intercal} \mathbf{t}\right|. \\
\end{align*}

Next we rely on an observation coming from the structure of our example $C_{n}$. By Lemma $2$ of \cite{01simplex}, the shadow simplex path is both strictly $\mathbf{c}$-increasing and $\mathbf{w}$-increasing. Since the path on $C_{n}$ contains all the vertices of $P$, $\mathbf{c}$ and $\mathbf{w}$ must induce the same ordering on the vertices of $P$. In particular, an edge is $\mathbf{c}$-improving if and only if it is $\mathbf{w}$-improving.   It follows that $\mathbf{w}^{\intercal}(\mathbf{u} - \mathbf{v}) > 0$ for all $\mathbf{u}$ and $\mathbf{v}$ such that $\mathbf{u}$ is a $\mathbf{c}$-improving neighbor of $\mathbf{v}$ meaning that $\mathbf{w}^{\intercal} \mathbf{s} > 0$ for all $\mathbf{s} \in S$. This observation ensures that the sign of $\mathbf{w}^{\intercal} \mathbf{t}$ and $\eta(\mathbf{t})$ coincide. In paritcular, this allows us to factor out $\mathbf{w}^{\intercal} \mathbf{t}$ to find  
\[|\eta(A_{k}(\mathbf{t})) - \eta(\mathbf{w})\mathbf{w}^{\intercal} \mathbf{t}| = \mathbf{w}^{\intercal} \mathbf{t} \left|\eta\left(\mathbf{w} + A_{k}\left(\frac{\mathbf{t} - \mathbf{w}^{\intercal} \mathbf{t} \mathbf{w}} {\mathbf{w}^{\intercal} \mathbf{t}}\right)\right) - \eta(\mathbf{w}) \right|.\]

Recall that norms are continuous, so there exists $\delta > 0$ sufficiently small and $\mathbf{y} \in \mathbb{R}^{n}$, such that whenever $\|\mathbf{y}\|_{1} < \delta$,
\[|\eta(\mathbf{w} + \mathbf{y}) - \eta(\mathbf{w})| < \frac{\varepsilon}{\mathbf{w}^{\intercal} \mathbf{t}}. \]

Furthermore, there exists $k_{\mathbf{t}}$ so large that that for all $k > k_{\mathbf{t}}$, by the definition of $A_{k}$,
\[\left\|A_{k}\left(\frac{\mathbf{t} - (\mathbf{w}^{\intercal} \mathbf{t} \mathbf{w})}{\mathbf{w}^{\intercal} \mathbf{t}}\right) \right\|_{1} =  \left\|\frac{1}{k\mathbf{w}^{\intercal} \mathbf{t}}(\mathbf{t} - \mathbf{w}^{\intercal} \mathbf{t} \mathbf{w})\right\|_{1} < \delta. \]
Hence, for all $k > k_{\mathbf{t}}$, 
\[|\eta(A_{k}(\mathbf{t})) - \eta(\mathbf{w})\mathbf{w}^{\intercal} \mathbf{t}| = \mathbf{w}^{\intercal} \mathbf{t} \left|\eta\left(\mathbf{w} + A_{k}\left(\frac{\mathbf{t} - \mathbf{w}^{\intercal} \mathbf{t} \mathbf{w}} {\mathbf{w}^{\intercal} \mathbf{t}}\right)\right) - \eta(\mathbf{w}) \right| < (\mathbf{w}^{\intercal} \mathbf{t}) \frac{\varepsilon}{\mathbf{w}^{\intercal} \mathbf{t}} = \varepsilon,\]
which completes the proof of Theorem \ref{thm:anynorm}.

To prove Theorem \ref{thm:allnormsatonce}, note that there are two choices we make that depend on the norm. The first is the value of $\varepsilon$, which depends on the value of $\eta(\mathbf{w})$. Without loss of generality, up to an invertible linear transformation, we may assume that $\mathbf{w} = e_{1}$. Then, since $\eta$ is regular, $\eta(\mathbf{w}) = 1$. Hence, the choice of $\varepsilon$ will be the same for all regular norms.

The choice of $k_{\mathbf{t}}$ depended solely on the choice of $\delta$ independent of the norm. Namely, we need $\delta$ to be made so small that whenever $\|\mathbf{y}\|_{1} < \delta$,
\[|\eta(\mathbf{w} + \mathbf{y}) - \eta(\mathbf{w})| < \frac{\varepsilon}{\mathbf{w}^{\intercal} \mathbf{t}}. \]
Thus, to prove Theorem \ref{thm:allnormsatonce}, it suffices to show that a singular choice of $\delta$ works for all regular norms. Suppose first that $\eta(\mathbf{w} + \mathbf{y}) \geq \eta(\mathbf{w})$. Then, by the triangle inequality,
\[\eta(\mathbf{w} + \mathbf{y}) - \eta(\mathbf{w}) \leq \eta(\mathbf{w}) + \eta(\mathbf{y}) - \eta(\mathbf{w}) = \eta(\mathbf{y}). \]
Similarly, if $\eta(\mathbf{w} + \mathbf{y}) < \eta(\mathbf{w})$, by the triangle inequality
\[\eta(\mathbf{w}) - \eta(\mathbf{w} + \mathbf{y}) = \eta(\mathbf{w} + \mathbf{y} - \mathbf{y}) - \eta(\mathbf{w} + \mathbf{y}) \leq \eta(-\mathbf{y}) + \eta(\mathbf{w}+ \mathbf{y}) - \eta(\mathbf{w} + \mathbf{y}) = \eta(\mathbf{y}).  \]
Hence, $|\eta(\mathbf{w} + \mathbf{y}) - \eta(\mathbf{w})| \leq \eta(\mathbf{y})$. Then, for any regular norm, whenever $\|\mathbf{y}\|_{1} < \frac{\varepsilon}{\mathbf{w}^{\intercal} \mathbf{x}}$, by the triangle inequality
\[|\eta(\mathbf{w} + \mathbf{y}) - \eta(\mathbf{w})| \leq \eta(\mathbf{y}) \leq \sum_{i=1}^{n} \eta(\mathbf{y}_{i} e_{i}) = \sum_{i=1}^{n} |y_{i}| \eta(e_{i}) = \sum_{i=1}^{n} |y_{i}| = \|\mathbf{y}\|_{1} < \frac{\varepsilon}{\mathbf{w}^{\intercal} \mathbf{x}}.\]
Hence, $\delta = \frac{\varepsilon}{\mathbf{w}^{\intercal} \mathbf{x}}$ works simultaneously for any regular norm meaning that there exists a combinatorial cube for which the path through all the vertices is chosen for the steepest edge for every regular norm simultaneously. 
\end{proof}

While we state our results for norms, our proof technique is stronger, and one can generalize Theorem \ref{thm:anynorm} to a broader class of functions. Namely, so long as the normalization function is fixed, the same argument yields exponential lower bounds for any normalization function that is continuous, positive homogeneous  (i.e., $f(\lambda \mathbf{x}) = |\lambda| f(\mathbf{x})$), and such that $f(\mathbf{x}) = 0$ if and only if $\mathbf{x} = \mathbf{0}$. In particular, we did not require the triangle inequality until we searched for a uniform bound independent of the norm.  This is a large class of functions encompassing every continuous function from the $n$-sphere to the positive real line and extended to all of $\mathbb{R}^{n}$ to force positive homogeneity. 

One could ask for an example that fails for all positively homogeneous continuous functions simultaneously, which would vastly generalize the result of Theorem \ref{thm:allnormsatonce}. However, by the argument in the proof of Theorem $1.3$ of \cite{pivotpolytopes}, finding a generic linear program where every positively homogeneous, continuous normalization function yields an exponential length path is equivalent to the monotone variant of the polynomial Hirsch conjecture. Thus, finding such an example is likely very hard or impossible. Doing so for norms is more likely to be feasible:

\begin{op}
Does there exist a family of linear programs for which the steepest edge path for any norm is always of exponential length?
\end{op}

A particularly interesting special case not covered by our results here is for norms of the form $\eta(\mathbf{x}) = \|A \mathbf{x}\|_{2}$ for an invertible linear transformation $A$. An exponential lower bound for all norms of that type would imply the existence of a family of polytopes for which the $2$-norm steepest edge pivot rule would follow an exponential length path for some linear program on any affine transformation of those polytopes. It still remains interesting to see if one can weaken our assumptions for our results further in the non-uniform case as well. We conclude with the following open problem:

\begin{op}
\label{op:steepness}
Consider the set of functions $\mathcal{C}$ such that $f \in \mathcal{C}$ if $f: \mathbb{R}^{n} \to \mathbb{R}$ is continuous and $f(\mathbf{x}) = 0$ if and only if $\mathbf{x} = \mathbf{0}$. Is it true that for all $f \in \mathcal{C}$, there exists a family of polytopes for which the following pivot rule requires exponentially many steps to solve an LP:
\[\mathbf{v}^{i+1} = \argmax_{\mathbf{u} \in N_{\mathbf{c}}(\mathbf{v}^{i})} \frac{\mathbf{c}^{\intercal}(\mathbf{u} - \mathbf{v}^{i})}{f(\mathbf{u} -\mathbf{v}^{i})}, \]
where $N_{\mathbf{c}}(\mathbf{v}^{i})$ is the set of $\mathbf{c}$-improving neighbors of $\mathbf{v}^{i}$?
\end{op}

Note that the argument of Theorem \ref{thm:anynorm} does not directly extend to this problem. We can use the same observation and apply a transformation in which all edges are almost parallel with a fixed direction $\mathbf{w}$. By continuity, $f(\mathbf{x}) \sim f((\mathbf{w}^{\intercal} \mathbf{x}) \mathbf{w})$. If $f$ is positively homogenous, the function $g(\lambda) := f(\lambda \mathbf{w})$ satisfies $g(\lambda) = |\lambda| f(\mathbf{w})$, and that allows for our proof. However, in general, $g$ could be any continuous real-valued function, which forces the argument to break down. An intuitive explanation of our results is that there is no uniform method to find a polynomial length simplex path purely in terms of the edge directions of the polytope. If the answer to open Problem \ref{op:steepness} is yes, then even knowing the edges and their lengths would be insufficient without taking into account more information about the polytope.

\section*{Acknowledgments}
The author was supported by the NSF Graduate Research Fellowship Program (GRFP) and appreciates comments of Jes\'{u}s De Loera and Sean Kafer on earlier versions of this manuscript. The author would also like to thank Sophie Huiberts for noting a connection between the $2$-norm steepest edge rule and shadow pivot rules on affine transformations of polytopes.

\bibliographystyle{amsplain}
\bibliography{bibliography.bib} 

	
\end{document}